\documentclass[a4paper,11pt]{article}
\usepackage{amsmath,amssymb,amsthm,graphics,latexsym,amsfonts}
\usepackage{fancyhdr}
\usepackage{CJK}
\usepackage{graphicx}
\usepackage{indentfirst}
\usepackage{color}
\usepackage{ifpdf}
\usepackage{tikz}
\usepackage{cite}
\usepackage{mathrsfs}
\usepackage{float}
\usepackage{xcolor}
\usepackage{pifont}
\usepackage{amsthm}
\usepackage{algorithm,algcompatible,amsmath}
\usepackage{algpseudocode}
\usepackage[numbers,sort&compress]{natbib}
\usepackage[colorlinks,linkcolor=blue,anchorcolor=blue,citecolor=blue]{hyperref}
\usepackage{booktabs}
\usepackage{cases}
\usepackage{diagbox}
\usepackage{csquotes}

\usetikzlibrary{positioning}

\title{The list $r$-hued coloring of trees and unicyclic graphs
\thanks{Research supported by the Natural Science Foundation of Xinjiang Uygur Autonomous Region (No. 2025D01C35).}}
\author{ Yu Miao$^{a}$, Fengxia Liu$^{a}$\thanks{Corresponding author. Email address: xjulfx@163.com}
 \\
\small $^{a}$College of Mathematics and Systems Science, Xinjiang University, \\
\small Urumqi, Xinjiang, 830017, P.\ R.\ China\\
}
\date{}
\begin{document}
\maketitle

\newtheorem{theorem}{Theorem}[section]
\newtheorem{definition}[theorem]{Definition}
\newtheorem{lemma}[theorem]{Lemma}
\newtheorem{Claim}[theorem]{Claim}
\newtheorem{corollary}[theorem]{Corollary}
\newtheorem{Conjecture}[theorem]{Conjecture}
\newtheorem{remark}[theorem]{Remark}
\newtheorem{proposition}[theorem]{Proposition}
\theoremstyle{plain} \CJKtilde
\newcommand{\D}{\displaystyle}
\newcommand{\DF}[2]{\D\frac{#1}{#2}}
\newtheorem{claim}{Claim}

\begin{abstract}
Let $r$ be a positive integer and $G$ be a graph. The list $r$-hued chromatic number of $G$, denoted by $\chi_{L,r}(G)$, is the smallest integer $k$, such that for each $k$-list $L$ of $G$, $G$ has an $(L,r)$-coloring. It is proved in [Discrete Math. 306 (16) (2006) 1997-2004] that every tree $G$ satisfies $\chi_{r}(G)=\min\{r,\Delta(G)\}+1$. It is known that every cycle graph $C_{n}$ with order $n$ has $\chi_{L,r}(C_{n})=\chi_{r}(C_{n})$.
The main results are the following:

 $(1)$ If $G$ is a tree, then $\chi_{L,r}(G)=\min\{r,\Delta(G)\}+1$;

 $(2)$ Let $G$ be a unicyclic graph which is not isomorphic to the cycle $C_{n}$. If $n\neq 5$ and $r\geq3$, then $\chi_{L,r}(G)=\min\{r,\Delta(G)\}+1$; otherwise, $\min\{r,\Delta(G)\}+1\leq\chi_{L,r}(G)\leq\min\{r,\Delta(G)\}+2$.

\end{abstract}

\noindent{\bf Keywords:} $(L,r)$-coloring; list $r$-hued chromatic number; tree; unicyclic graph

\noindent{\bf AMS subject classification 2020:} 05C15, 05C76

\section{Introduction}
Let $\mathbb{N}$ denote the set of all natural numbers and $2^{\mathbb{N}}$ be the power set of $\mathbb{N}$. All graphs considered in this paper are simple and finite, and undefined terminologies and notation will follow \cite{BoMu01}. As in \cite{BoMu01}, we use $N_{G}(v)$, $d_{G}(v)$, $\delta(G)$ and $\Delta{(G)}$ to denote the set of neighbors of a vertex $v$, the degree of a vertex $v$, the minimum degree of a graph $G$ and the maximum degree of a graph $G$.
A stable set is a set of vertices no two of which are adjacent. A stable set in a graph is a maximum stable set when there is no larger stable set in the graph. The number of vertices in a maximum stable set of a graph $G$ is referred to as the stability number of $G$, which is denoted by $\alpha(G)$.
Denote the distance of two vertices $u$ and $v$ in $G$ as $d_{G}(u, v)$. A list of a graph $G$ is an assignment $L: V(G)\rightarrow 2^{\mathbb{N}}$ that assigns every $v\in V(G)$ a list $L(v)$ of colors available at $v$. Let $k$ and $r$ be positive integers. A list $L$ of a graph $G$ is a {\em $k$-list} if $|L(v)|= k$ for any $v\in V(G)$. For a mapping $c: V(G)\rightarrow \mathbb{N}$, we define the following conditions on $c$.

\par $(C1)$ The proper coloring condition: $c(u)\neq c(v)$ for every edge $uv\in E(G)$;
\par $(C2)$ The $r$-hued coloring condition: $|c(N_{G}(v))| \geq \min\{d_{G}(v), r\}$ for any $v\in V(G)$;
\par $(C3)$ The list coloring condition: $c(v)\in L(v)$, for every $v\in V(G)$.

\par A mapping $c: V(G)\rightarrow\{1, 2, \ldots, k\}$ satisfying condition $(C1)$ is a proper {\em $k$-coloring} of $G$; one that satisfies both $(C1)$ and $(C2)$ is a {\em $(k,r)$-coloring} ({\em $r$-hued $k$-coloring}) of $G$. If $c: V(G)\rightarrow \mathbb{N}$ and satisfies both $(C1)$ and $(C3)$, then $c$ is an {\em $L$-coloring} of $G$. Furthermore, if an $L$-coloring $c$ also satisfies $(C2)$, then $c$ is an {\em $(L,r)$-coloring} ({\em list $r$-hued $L$-coloring}) of $G$.

\par The {\em chromatic number} of $G$, denoted by $\chi(G)$, is the smallest integer $k$ such that $G$ has a proper $k$-coloring. The {\em $r$-hued chromatic number} of $G$, denoted by {\em $\chi_{r}(G)$}, is the smallest $k$ such that $G$ has a $(k,r)$-coloring. The {\em list chromatic number} of $G$, denoted by {\em $\chi_{L}(G)$}, is the smallest integer $k$ such that for any $k$-list $L$ of $G$, $G$ has an $L$-coloring. The {\em list $r$-hued chromatic number} of $G$, denoted by {\em $\chi_{L,r}(G)$}, is the smallest integer $k$, such that for any $k$-list $L$ of $G$, $G$ has an $(L,r)$-coloring. By definition, $\chi_{L}(G)=\chi_{L,1}(G)$. The {\em square of a graph} $G$, denoted by $G^{2}$, has $V(G^{2})=V(G)$, where $uv\in E(G^{2})$ if and only if the distance between $u$ and $v$
in $G$ is at most $2$. By definition, for a graph $G$ with $\Delta=\Delta(G)$, $\chi(G^{2})=\chi_{\Delta}(G)$.

\par The concept of $r$-hued coloring was first introduced in the dissertation of Bruce Montgomery in \cite{SHI29} (see also  \cite{{LMP30}}). The concept of list $2$-hued coloring in graphs was first introduced in \cite{SMS13} under the term \enquote{list dynamic coloring}. Later, in \cite{CFLSS15}, the general notion of list $r$-hued coloring for arbitrary values of $r$ was formally defined.

\par It follows from the definition that for every graph $G$, if $1\leq i \leq j$, then

\begin{equation}\label{eq:1'}
\chi_{i}(G)\leq \chi_{j}(G);~\chi_{L,i}(G)\leq \chi_{L,j}(G).
\end{equation}

It is observed in \cite{LLMS05} that for any graph $G$ and for any positive integer $r$, the following always holds:

\begin{equation}\label{eq:2'}
\min\{r,\Delta(G)\}+1\leq \chi_{r}(G)\leq \chi_{L,r}(G)\leq |V(G)|.
\end{equation}

\par For an integer $n\geq3$, let $C_{n}$ denote a cycle of order $n$. If $r=1$, then it is known that

\begin{equation}\label{eq:3'}
\chi(C_{n})=
\begin{cases}
2 & n\equiv0~(mod~2); \\
3 & n\equiv1~(mod~2). \\
\end{cases}
\end{equation}

\par The $r$-hued chromatic number of trees was determined in \cite{LLMS05} in 2006.

\begin{theorem} \rm\text{(}\cite{LLMS05}\text{)}\label{Theorem1.1'}.
\itshape{If $G$ is a tree with $|V(G)|\geq3$, then $\chi_{r}(G)=\min\{r,\Delta(G)\}+1$}.
\end{theorem}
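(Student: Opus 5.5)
The lower bound $\chi_{r}(G)\geq \min\{r,\Delta(G)\}+1$ is already given by (\ref{eq:2'}), so the plan is to construct a $(k,r)$-coloring of the tree $G$ with $k=\min\{r,\Delta(G)\}+1$ colors. I would do this greedily along a breadth-first ordering from a root. Fix a root $v_0$ and list the vertices $v_0,v_1,\ldots,v_{n-1}$ in BFS order, so every vertex other than $v_0$ has its parent earlier in the list, and the children of each vertex appear consecutively.

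The invariant maintained is the following. When all children of a vertex $u$ have been colored, the colors on $N_G(u)$ are pairwise distinct as far as possible. Precisely, the children of $u$ receive colors different from $c(u)$ and different from $c(p(u))$, where $p(u)$ is the parent of $u$. Among themselves the children use distinct colors until $\min\{r,d_G(u)\}$ distinct colors are present in $N_G(u)$; after that, any colors are allowed.

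This is feasible for the following reason. When coloring the children of $u$, the forbidden colors are $c(u)$ (needed for properness) together with the colors already in $N_G(u)$ that must be avoided to gain new hues. We need $\min\{r,d_G(u)\}$ distinct colors in $N_G(u)$, none equal to $c(u)$. These are available because $k-1=\min\{r,\Delta\}\geq \min\{r,d_G(u)\}$ colors differ from $c(u)$. Once the target is reached, any further child simply needs a color $\neq c(u)$, and one exists since $k\geq 2$.

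One must also check that coloring a child $w$ of $u$ does not spoil the condition (C2) at $w$ itself. Later, when we color the children of $w$, they must avoid $c(w)$ and we want them to avoid $c(u)$ so that $c(u)$ counts as one hue in $N_G(w)$. The number of hues required at $w$ is $\min\{r,d_G(w)\}\leq k-1$. One of them is supplied by $c(u)$, so we need $\min\{r,d_G(w)\}-1$ further distinct colors avoiding both $c(w)$ and $c(u)$. There are $k-2\geq \min\{r,d_G(w)\}-1$ such colors, so this works. The root is handled by the first step with no parent constraint.

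Properness holds because every color is chosen different from that of the parent. Condition (C2) at each vertex $u$ holds because its neighborhood consists of its parent plus its children, colored as above. The main (though mild) obstacle is this counting at non-root vertices, where the parent's color consumes one of the required hues. The key point is that we forbid children from repeating the grandparent's color only while hues are still needed, and the count $k-2\geq\min\{r,d(w)\}-1$ makes this possible. The hypothesis $|V(G)|\geq 3$ merely excludes degenerate cases such as $K_2$, where the formula trivially also holds; it does not otherwise enter the argument.
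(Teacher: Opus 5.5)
Your proof is correct and is essentially the paper's own argument. The paper only cites this result from \cite{LLMS05}, but its proof of the list version (Lemma~\ref{Lemma2.2'}) deletes a leaf $u_0$ and gives it a color avoiding $c(u_1)$ and, when $d_H(u_1)<r$, the colors on $N_H(u_1)$; unrolled, this is a parent-first greedy coloring with essentially your rule and your count (at most $\min\{r,d(u)\}\le k-1$ forbidden colors per step), which is also why your construction works verbatim for arbitrary $k$-lists.

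One small fix: state the rule once, as in your last paragraph, where children avoid the grandparent's color only while hues are still missing. The unconditional version in your second paragraph fails for $k=2$, e.g.\ $r=1$ on a path rooted at an end.
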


\par In 2024,  the list chromatic number of trees and cycles were determined in \cite{MJVU21}. If a tree is nontrivial, it must contain a vertex of degree exactly one. Such a vertex is called a leaf of the tree.

\begin{theorem} \rm\text{(}\cite{MJVU21}\text{)}\label{Theorem2.1'}.
\itshape{For every nontrivial tree $T$, $\chi_{L}(T)=2$}.
\end{theorem}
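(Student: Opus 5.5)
We prove that every nontrivial tree $T$ is $2$-choosable: for any list assignment $L$ with $|L(v)|\ge 2$ for all $v\in V(T)$, there is a proper $L$-coloring. Together with the trivial lower bound $\chi_L(T)\ge\chi(T)=2$ (a nontrivial tree has an edge), this gives $\chi_L(T)=2$.

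The argument is a greedy coloring along an ordering in which each vertex has at most one earlier neighbor. We first observe that such an ordering exists. Since $T$ is connected and acyclic, we fix a root $v_1$ and list the vertices as $v_1,v_2,\dots,v_n$ by breadth-first search from $v_1$. For $i\ge 2$, the earlier neighbors of $v_i$ are exactly its parent in the search tree. If $v_i$ had two earlier neighbors, then together with the paths from each of them back to $v_1$ they would create a cycle, which is impossible in a tree.

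Equivalently, one can argue by induction on $n=|V(T)|$ using the fact that a nontrivial tree has a leaf. The base case $n=2$ is a single edge $uv$: choose any $c(u)\in L(u)$, and then $L(v)\setminus\{c(u)\}\neq\emptyset$ because $|L(v)|\ge 2$. For the inductive step, remove a leaf $x$ with unique neighbor $y$. The tree $T-x$ is still nontrivial when $n\ge 3$, so by induction it has a proper $L$-coloring. We then color $x$ with any color in $L(x)\setminus\{c(y)\}$, which is nonempty since $|L(x)|\ge 2$.

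With the ordering in hand, we color greedily. Choose $c(v_1)\in L(v_1)$ arbitrarily. For $i\ge 2$, choose $c(v_i)\in L(v_i)\setminus\{c(p(v_i))\}$, where $p(v_i)$ is the unique earlier neighbor of $v_i$. This set is nonempty because $|L(v_i)|\ge 2$ and at most one color is forbidden.

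Every edge of $T$ joins some $v_i$ to its earlier neighbor, so properness is checked exactly when the later endpoint is colored. Hence $c$ is a proper $L$-coloring, and $\chi_L(T)\le 2$.

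There is no real obstacle in this argument. The one point that genuinely uses acyclicity, rather than bare connectivity, is that each vertex has at most one earlier neighbor. For a graph with a cycle this fails: the last vertex of the cycle to be colored has two earlier neighbors, and indeed odd cycles are not $2$-choosable.
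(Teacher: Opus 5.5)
Your proof is correct: the greedy coloring along an ordering in which each vertex has exactly one earlier neighbor, equivalently the leaf-deletion induction, together with $\chi_L(T)\ge\chi(T)=2$, settles the statement. The paper does not prove this result itself (it is quoted from \cite{MJVU21}), but your leaf-deletion induction is exactly the mechanism of the paper's own Lemma~\ref{Lemma2.2'}: its Case~2 colors the removed leaf $u_0$ from $L(u_0)-\{c(u_1)\}$, and your greedy ordering is that induction unrolled, so the approaches agree.
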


\begin{theorem} \rm\text{(}\cite{MJVU21}\text{)}\label{Theorem2.2'}.
\itshape{For every integer $n\geq3$, let $C_{n}$ denote the cycle of order $n$. Then}
\end{theorem}

\begin{equation}
\chi_{L}(C_{n})=
\begin{cases}
2 & n\equiv0~(mod~2); \\
3 & n\equiv1~(mod~2). \\
\end{cases}
\end{equation}

\par In \cite{JLJZ07}, Jia et al. determined the value of $\chi_{L,r}(G)$ for star graphs, which are trees of diameter $2$.
\begin{theorem} \rm\text{(}\cite{JLJZ07}\text{)}\label{Theorem1.4'}.
If $G$ is a $K_{1,n-1}$, then $\chi_{L,r}(G)=\min\{r,\Delta(G)\}+1$.
\end{theorem}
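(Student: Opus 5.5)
The statement to prove is Theorem~\ref{Theorem1.4'}: for the star $G=K_{1,n-1}$ we have $\chi_{L,r}(G)=\min\{r,\Delta(G)\}+1$. The lower bound is immediate from (\ref{eq:2'}), so only the upper bound needs an argument. Set $k=\min\{r,\Delta(G)\}+1$ and fix an arbitrary $k$-list $L$. We must construct an $(L,r)$-coloring. I assume $n\ge 2$, so $\Delta(G)=n-1\geq 1$; the case $n=1$ is trivial.

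Let $v$ be the center and $u_1,\dots,u_{n-1}$ the leaves. Each leaf has degree $1$, and its unique neighbour is $v$, so condition (C2) holds automatically at every leaf. Thus we only need two things. First, a proper coloring, which amounts to $c(u_i)\neq c(v)$ for every $i$. Second, the center condition $|c(\{u_1,\dots,u_{n-1}\})|\geq \min\{n-1,r\}=k-1$.

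The plan is a greedy construction.
\begin{enumerate}
\item Choose $c(v)\in L(v)$ arbitrarily.
\item For $i=1,\dots,k-1$ in turn, color $u_i$ with a color in $L(u_i)\setminus\{c(v),c(u_1),\dots,c(u_{i-1})\}$. The excluded set has at most $i\le k-1$ colors and $|L(u_i)|=k$, so such a color always exists. This produces $k-1$ distinct colors on $u_1,\dots,u_{k-1}$, all different from $c(v)$. These indices exist because $k-1\le n-1$.
\item For each remaining leaf $u_i$ with $i\ge k$, choose any color of $L(u_i)\setminus\{c(v)\}$. This set is nonempty since $k\geq 2$.
\end{enumerate}

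The resulting $c$ satisfies (C1) and (C3). The center sees at least $k-1=\min\{r,\Delta\}$ distinct colors, so (C2) holds at $v$. Since (C2) is automatic at the leaves, $c$ is an $(L,r)$-coloring, and therefore $\chi_{L,r}(G)\le k$.

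There is no real obstacle here. The only point requiring care is the counting in step 2: the colors to avoid are $c(v)$ together with the $i-1$ previously used leaf colors, at most $i\le k-1<k$ of them.
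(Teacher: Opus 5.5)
Your greedy proof is correct and complete, and it handles $r=1$ and the degenerate cases $n\le 2$ without separate treatment. The paper does not prove this statement itself (it is quoted from \cite{JLJZ07}), but it is the star case of Theorem~\ref{TH1.5'}. On a star, the proof of Lemma~\ref{Lemma2.2'} (delete a leaf, colour the rest inductively, then give the leaf a colour avoiding its neighbour and, when $d_H(u_1)<r$, also the colours on $N_H(u_1)$) unrolls into exactly your leaf-by-leaf greedy; the only difference is that the paper handles $r=1$ separately via Theorem~\ref{Theorem2.1'}.
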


\par As star graphs are special instances of trees, a natural question arises: does an analogue of Theorem \ref{Theorem1.1'} hold for $\chi_{L,r}(G)$ of trees? Theorem \ref{TH1.5'} gives an affirmative answer to this question, which extends Theorem \ref{Theorem1.1'}.

\begin{theorem} \label{TH1.5'}
If $G$ is a tree, then $\chi_{L,r}(G)=\min\{r,\Delta(G)\}+1$.
\end{theorem}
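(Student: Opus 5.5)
The lower bound is immediate from (\ref{eq:2'}), so the work is the upper bound. Set $k=\min\{r,\Delta(G)\}+1$ and let $L$ be an arbitrary $k$-list of the tree $G$. We will construct an $(L,r)$-coloring greedily along a breadth-first ordering from a root. Small cases are handled directly: if $|V(G)|\le 2$, any proper list coloring works, since $k\ge 2$ and the $r$-hued condition asks for at most one color in each neighborhood. When $\Delta(G)=1$ or $r=1$ we have $k=2$, and Theorem \ref{Theorem2.1'} already gives an $L$-coloring, which satisfies (C2) trivially. So we assume $|V(G)|\ge 3$, $r\ge 2$ and $\Delta(G)\ge 2$.

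Fix a root $v_0$ and order the vertices $v_0,v_1,\dots,v_{n-1}$ by BFS, so that the children of each vertex appear consecutively and every vertex other than $v_0$ has exactly one earlier neighbor (its parent). We color in this order and maintain two invariants:
\begin{itemize}
\item[(a)] the coloring is proper and no two vertices at distance $2$ share a color, except possibly two vertices that are both at distance $2$ from a vertex $w$ which already sees $r$ distinct colors among its colored neighbors;
\item[(b)] for each already colored vertex $w$, the colors on its colored neighbors number at least $\min\{r,\#\text{colored neighbors of } w\}$.
\end{itemize}
Concretely, we treat vertex $w$ by coloring all of its children at once, after $w$ and its parent $p$ are colored. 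Let $u_1,\dots,u_t$ be the children of $w$, so that $d(w)=t+1$ if $w\ne v_0$. The children of $w$ must avoid $c(w)$. If $w$ still needs more distinct colors, they should also avoid $c(p)$ and each other.

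Choose $u_1,\dots,u_s$ with $s=\min\{t,r-1\}$ (or $s=\min\{t,r\}$ when $w=v_0$) to get distinct colors different from $c(w)$ and $c(p)$. This is possible greedily: $u_i$ must avoid $c(w)$, $c(p)$ and the $i-1$ earlier colors, at most $i+1\le r+1$ values in total. Hence $k\ge r+1$ suffices when $r\le\Delta$, which is the key counting step. When $\Delta<r$ we have $k=\Delta+1$ and $t+1\le\Delta$, so $u_i$ avoids at most $i+1\le t+1\le \Delta<k$ colors, and again a choice exists. The remaining children $u_{s+1},\dots,u_t$ only need to avoid $c(w)$. Each of them then sees exactly one neighbor among the colored vertices, so its own condition is currently trivial.

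The main obstacle is that these greedy choices must not destroy condition (C2) at $w$, at $p$, or at later vertices. In a tree, a vertex's neighborhood consists of its parent and its children. Its children are colored exactly when the vertex itself is processed, and at that moment we may choose them freely subject to avoiding $c(\text{vertex})$ and $c(\text{parent})$. So (C2) at each vertex depends only on choices made during its own step, which yields $\min\{r,d(w)\}$ distinct colors in $N(w)$. I expect the delicate part to be checking the count at $w$ when $w=v_0$ (no parent, requiring $s=\min\{t,r\}$ children with $t=d(v_0)$) and when the parent's color must be counted among the $r$. In the latter case only $r-1$ new colors are needed, and the parent's color is guaranteed distinct from them because we excluded it. Once these cases are verified, properness holds automatically and induction on the BFS order completes the proof.
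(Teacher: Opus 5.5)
Your argument is correct and is essentially the paper's proof run forwards. Lemma~\ref{Lemma2.2'} inducts by deleting a leaf $u_0\neq v_0$ with neighbor $u_1$, and colors $u_0$ avoiding $c(u_1),c(x_1),\ldots,c(x_t)$ when $d_H(u_1)<r$ and only $c(u_1)$ otherwise; this is exactly your rule for the children of $w$ along a BFS order, with the same bound $d_G(u_1)\le\min\{r,\Delta(G)\}=k-1$ on forbidden colors.

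One fix to your key counting step: $u_i$ must avoid at most $r$ colors, not $r+1$. You have $i\le s\le r-1$ when $w$ has a parent $p$, and there is no $c(p)$ term when $w=v_0$ (where $i\le r$). It is this bound of $r$, not the stated $r+1$, that makes $k=r+1$ suffice.
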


\par In 2009, Akbari, Ghanbari, and Jahanbekam \cite{SMS13} observed $\chi_{L,2}(C_{n})=\chi_{2}(C_{n})$. By \text{(}\ref{eq:1'}\text{)} and \text{(}\ref{eq:2'}\text{)}, $\chi_{2}(C_{n})\leq \chi_{L,2}(C_{n})=\chi_{L,3}(C_{n})=\cdots $. Therefore, the following theorem is obtained.

\begin{theorem} \rm\text{(}{\cite{SMS13,LLMS05}}\text{)}\label{theorem1.2'}.
\itshape{If $r\geq2$, then}
$$
\chi_{L,r}(C_{n})=\chi_{r}(C_{n})=
\begin{cases}
\text 5 \quad if\    n=5; \\
\text 3 \quad if\  n\equiv0~(mod~3); \\
\text 4 \quad otherwise.
\end{cases}
$$
\end{theorem}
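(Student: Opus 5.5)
The statement to prove is Theorem \ref{theorem1.2'}. I would derive it by combining the known value of $\chi_r(C_n)$ from \cite{LLMS05} with the inequality chain (\ref{eq:1'})--(\ref{eq:2'}). The argument would then reduce to a single list-coloring statement for $r=2$.

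First, every vertex of $C_n$ has degree $2$, so $\min\{d(v),r\}=2$ for all $r\geq 2$. Hence conditions $(C1)$--$(C3)$ for $r\geq2$ coincide with those for $r=2$, and $\chi_{L,r}(C_n)=\chi_{L,2}(C_n)$ and $\chi_r(C_n)=\chi_2(C_n)$ for every $r\geq 2$. It therefore suffices to treat $r=2$.

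For $r=2$, an $(L,2)$-coloring of $C_n$ with vertices $v_1,\dots,v_n$ in cyclic order is exactly an $L$-coloring of the graph $C_n^{2}$: properness forbids equal colors at distance $1$, and the $2$-hued condition at $v_i$ forces $c(v_{i-1})\neq c(v_{i+1})$, which forbids equal colors at distance $2$. Note that $C_n^2$ is $4$-regular for $n\geq 5$, that $C_5^2=K_5$, and that $C_4^2=K_4$ and $C_3^2=K_3$.

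\textbf{Lower bounds.} These come from (\ref{eq:2'}) together with the known values of $\chi_2(C_n)$: $3$ when $3\mid n$, $5$ when $n=5$, and $4$ otherwise. A short argument also works directly. Any $(k,2)$-coloring gives three distinct colors on each consecutive triple. If only $3$ colors are used, the coloring is forced to be periodic with period $3$, so $3\mid n$. For $n=5$ we have $C_5^2=K_5$, which needs $5$ colors.

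\textbf{Upper bounds.}
\begin{itemize}
\item Case $n=5$: $C_5^2=K_5$, which is trivially $5$-choosable.
\item Case $n\neq 5$, $3\nmid n$: we need $C_n^2$ to be $4$-choosable. For $n=4$ the graph is $K_4$, which is trivial. For $n\geq 7$ I would use a greedy order: color $v_1,\dots,v_n$ in order, reserving the last vertices. The issue is that $v_{n-1}$ and $v_n$ each see $4$ earlier-colored neighbors. I would handle this by the standard degeneracy trick: a connected graph that is not a Gallai tree (blocks are complete graphs or odd cycles) is degree-choosable. Since $C_n^2$ for $n\geq 6$ is $4$-regular, $2$-connected and not complete, it is not a Gallai tree, so it is $4$-choosable.
\item Case $3\mid n$: we need $C_n^2$ to be $3$-choosable. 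This is the main obstacle, since a $4$-regular graph must be colored from $3$-lists and greedy methods fail. I would attempt the Alon--Tarsi polynomial method on $C_n^2$ with $n=3m$, or an inductive argument that colors triples and uses the structure of $C_n^2$ as a ``triangle strip.''
\end{itemize}

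Alternatively, since the paper attributes this case to \cite{SMS13}, I would simply invoke their result that $\chi_{L,2}(C_n)=\chi_2(C_n)$. This gives the upper bound in all cases, and combined with the first reduction it yields the theorem for all $r\geq2$.
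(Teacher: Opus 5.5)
Your proposal is correct and ends up following the paper's route. The paper likewise uses only two facts. First, every vertex of $C_n$ has degree $2$, so $\chi_{L,r}(C_n)=\chi_{L,2}(C_n)$ for all $r\ge 2$. Second, it combines $\chi_{L,2}(C_n)=\chi_2(C_n)$ from \cite{SMS13} with the values of $\chi_2(C_n)$ from \cite{LLMS05}.

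Your additional self-contained pieces are all valid but optional: the period-$3$ argument for the lower bounds, $C_5^2=K_5$, and the list version of Brooks' theorem for $3\nmid n$, $n\ne 5$. The only nontrivial case, $3$-choosability of $C_n^2$ when $3\mid n$, is settled in both your proposal and the paper by citation rather than proved.
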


\par A connected graph $G$ is called a {\em $c$-cyclic graph} if $c=|E(G)|-|V(G)|+1$. In particular, $G$ is called a tree or a {\em unicyclic graph} if $c=0$ or $1$, respectively. Thus every unicyclic graph $G$ with $\delta(G)=1$ consists of a unique cycle and a nontrivial forest.

\par As Theorem \ref{TH1.5'} and \ref{theorem1.2'} already show the values of the list $r$-hued chromatic number of trees and cycles, it is natural to consider what the value of the list $r$-hued chromatic number of a unicyclic graph is. For this problem, we have arrived at the following result.

\begin{theorem} \label{TH1.6'}
Let $r$ be an integer, $G$ be a unicyclic graph with $\delta(G)=1$, $k=\min\{r,\Delta(G)\}+1$, and the cycle of $G$ have order $n\geq3$. Then

$$
\chi_{L,r}(G)\in
\begin{cases}
\{k\} & n\neq5~ and~ r\geqslant3; \\
\{k,k+1\} & n=5~ or~ r=2. \\
\end{cases}
$$
\end{theorem}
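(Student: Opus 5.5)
The lower bound $\chi_{L,r}(G)\geq k$ comes from (\ref{eq:2'}), so only the upper bounds need proof. Write $C=v_1v_2\cdots v_nv_1$ for the cycle of $G$. Since $\delta(G)=1$, $G$ is not a cycle. Hence some cycle vertex, say $v_1$, has degree at least $3$. Then $\Delta(G)\geq 3$, and for $r\geq 3$ we have $k\geq 4$; for $r=2$ we have $k=3$. Removing $C$ leaves a forest whose components are trees rooted at cycle vertices.

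The plan is to first $(L,r)$-color the cycle so that the local condition (C2) holds for its vertices, and then extend greedily outward along the pendant trees. The extension uses the argument behind Theorem \ref{TH1.5'}: process vertices in BFS order from the cycle. When we reach a vertex $u$ whose colored neighbor (its parent $p$) is already fixed, we color the children of $u$. Each child $w$ must avoid
\begin{itemize}
\item $c(u)$,
\item the colors already chosen on the other children of $u$, as long as $u$ still needs new colors,
\item $c(p)$, so that the color at distance two helps $u$ reach $\min\{d(u),r\}$ distinct neighbor colors.
\end{itemize}
Only $\min\{r,d(u)\}$ distinct colors are needed among $N(u)$. So once $u$ is satisfied, the remaining children only need to avoid $c(u)$, and lists of size $k$ suffice. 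Leaves impose no condition beyond properness.

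For the cycle stage with $r\geq3$ and $n\neq5$, color the vertices $v_1,\dots,v_n$ so that three conditions hold:
\begin{enumerate}
\item the coloring is proper on $C$;
\item $c(v_{i-1})\neq c(v_{i+1})$ for each cycle vertex $v_i$ of cycle-degree $2$ that has no pendant neighbor;
\item the distance-two conditions make later extension possible.
\end{enumerate}
Here $|L(v)|\geq 4$. This is the distance-two list coloring of $C_n$, i.e.\ list coloring of $C_n^2$, except that at $v_1$ we may relax, because $v_1$'s pendant children will supply extra colors. I will treat $C_n$ minus the constraint at $v_1$, which makes it path-like, and color greedily starting at $v_2$ around to $v_n$. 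Each vertex then sees at most 3 forbidden colors ($v_{i-1},v_{i-2}$, and at the end $v_1$ and its neighbor). With list size $4$ this works except possibly at the last one or two vertices. I will close up by choosing $c(v_1)$ last, or by using the freedom that $v_1$ need not have distinct-colored cycle neighbors. The requirement at $v_1$ is only $\min\{d(v_1),r\}$ distinct colors overall, and the children of $v_1$ can be colored afterward to avoid both $c(v_2)$ and $c(v_n)$ as needed. With $k\geq \min\{r,d(v_1)\}+1$ there are enough colors for this.

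The case $n=5$ is exceptional because $C_5^2=K_5$. For $n=5$ or $r=2$, the upper bound $k+1$ follows from the same greedy scheme with one extra color, which absorbs the closing conflict on the cycle.

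The main obstacle is the closing step around the cycle when $r\geq3$ and $n\neq5$ with exactly $4$ colors, especially when $\Delta=3$, $k=4$ and only $v_1$ has a pendant neighbor. The last vertices $v_{n-1},v_n$ face constraints from both sides, and in the worst case they may each see 4 forbidden colors. I will first try a Kempe-style recoloring or a choice of the starting vertex and its color. In particular, pick $c(v_2)\in L(v_2)$ and $c(v_n)$ first, with $c(v_2)\neq c(v_n)$ if possible, then run the greedy from $v_3$ to $v_{n-1}$, using that $L(v_{n-1})$ has 4 colors and only 4 constraints arise. A pigeonhole choice at the start then avoids the last collision. Small cases $n=3,4$ will be checked directly.
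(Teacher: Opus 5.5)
Your overall plan (colour the cycle first, then extend outward through the pendant trees) is the paper's, and your extension step is essentially Lemma~\ref{Lemma2.2'}/Lemma~\ref{Lemma3.1'}. The $k+1$ bounds for $n=5$ or $r=2$ also go through with your greedy. For $r=2$, every vertex of $M(G)$ has degree $>r$, so dropping $c(v_2)\neq c(v_n)$ at $v_1$ is legitimate. For $n=5$, $r\geq3$, lists of size $k+1\geq5$ colour $C_5^2=K_5$ greedily.

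\textbf{The gap.} It is in the case $n\neq5$, $r\geq3$, which is the real content of the theorem. Your closing step relies on $v_1$ ``not needing distinct-coloured cycle neighbours'', but that holds only when $d_G(v_1)>r$. If $d_G(v_1)\leq r$, then $\min\{d_G(v_1),r\}=d_G(v_1)$, so \emph{all} neighbours of $v_1$, in particular $v_2$ and $v_n$, must get distinct colours. The children of $v_1$ cannot repair a repeat.

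Now take $\Delta(G)=3$ and $r\geq3$ (or $r=3$ with every cycle vertex of degree at most $3$). Then $k=4$, no cycle vertex has any slack, and the cycle stage is exactly an $L$-colouring of $C_n^2$ from $4$-lists. Your relaxation does handle the subcase where some cycle vertex has $d_G>r$, but not this one. For $n\geq6$, $C_n^2$ is $4$-regular, so in any greedy order the last vertex sees four coloured neighbours against a list of size $4$. Your concrete variant (fix $c(v_2),c(v_n)$, then run $v_3,\dots,v_{n-1}$) still leaves $v_{n-1}$ or $v_1$ facing four constraints, as you note yourself. The ``pigeonhole choice at the start'' and the Kempe recolouring are never specified. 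So the case you flag as the main obstacle is unproved, and it is exactly the case that separates $n\neq5$ from $n=5$.

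\textbf{The missing ingredient} is that $C_n^2$ is $4$-choosable for $n\neq5$. The paper does not prove this by hand. It quotes Theorem~\ref{theorem1.2'} ($\chi_{L,r}(C_n)=\chi_r(C_n)\leq4$ for $n\neq5$), which gives an $(L,r)$-colouring $c_0$ of $C_n$ with two distinct colours around every cycle vertex. It then extends through each $T_{v_i}$ by Lemma~\ref{Lemma3.1'}(i) and glues the pieces with Lemma~\ref{3.1-3.2}.

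To keep your route self-contained, you need the list version of Brooks' theorem. Here is one way to get it for $n\geq6$:
\begin{itemize}
\item If some edge $xy$ of $C_n^2$ has $L(x)\neq L(y)$, colour $x$ with a colour outside $L(y)$. Then colour $C_n^2-x$ greedily in order of decreasing distance to $y$. Every vertex except $y$ has an uncoloured neighbour when it is coloured, and $y$ comes last with the colour of $x$ not in $L(y)$, so it has a free colour.
\item If all lists coincide, use $\chi(C_n^2)\leq4$ for $n\neq5$.
\item For $n=3,4$, $C_n^2\in\{K_3,K_4\}$ is trivially $4$-choosable.
\end{itemize}
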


\par For further insights and additional findings on this topic, as well as for other pertinent conclusions and references, the reader is advised to consult the literature cited as \cite{Mont14, Mont15, LLMS08, CFLX09}.
\par The proof of Theorem \ref{TH1.5'} will be presented in Section $2$, while the justification for Theorem \ref{TH1.6'} will be provided in Section 3.

\section{The list $r$-hued chromatic number of a tree}
Theorem \ref{TH1.5'} will be proved in this section. We start with a lemma.

\begin{lemma}\label{Lemma2.2'}
Let $G$ be a tree, $r$ be an integer with $r\geq2$, $k=\min\{r, \Delta(G)\}+1$, and let $L$ be a $k$-list of $G$. For any vertex $v_{0}$ of $G$, and for any $a_{0}\in L(v_{0})$, there exists an $(L,r)$-coloring $c$ of $G$ such that $c(v_{0})=a_{0}$.
\end{lemma}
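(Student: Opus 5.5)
We will root $G$ at $v_0$ and colour it greedily in breadth-first order, level by level. The invariant we maintain is the following. When a vertex $u$ is processed, $u$ and its parent $p(u)$ (if $u\neq v_0$) are already coloured, and the children of $u$ are still uncoloured. We then colour all children of $u$ at once.

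Set $c(v_0)=a_0$. Now process a vertex $u$ with children $w_1,\dots,w_m$. The forbidden colours, and the degree of freedom we need, depend on whether $u$ is the root.

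\emph{Case $u=v_0$.} Here $m=d_G(v_0)$. We need $c(w_i)\neq c(v_0)$ for each $i$. We also need $|c(N(v_0))|\ge \min\{d(v_0),r\}$, so we want $\min\{m,r\}$ distinct colours among the $w_i$. Each $w_i$ has at least $k-1=\min\{r,\Delta\}\ge \min\{r,m\}$ colours in $L(w_i)\setminus\{c(v_0)\}$.

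\emph{Case $u\neq v_0$.} Here $d(u)=m+1$, with parent $p=p(u)$. Each child $w_i$ must avoid $c(u)$. For the $r$-hued condition at $w_i$'s future neighbourhood, $w_i$ will later be the parent whose neighbourhood is $\{u\}\cup\text{children}(w_i)$. So it suffices that $w_i$'s own children avoid $c(u)$, and this is handled when $w_i$ is processed. The distance-two constraint relevant now is the condition at $u$: the set $\{c(p),c(w_1),\dots,c(w_m)\}$ must have size at least $\min\{m+1,r\}$. So we want the $w_i$ to receive colours distinct from one another, as far as needed, and distinct from $c(p)$. Each $w_i$ then has at least $k-2=\min\{r,\Delta\}-1$ admissible colours in $L(w_i)\setminus\{c(u),c(p)\}$. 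We need $t=\min\{m,r-1\}$ of the children to receive distinct colours, all different from $c(p)$. Since $m\le \Delta-1$, we have $t\le \min\{r,\Delta\}-1\le k-2$.

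In both cases the choice is made by the same greedy step. Pick $t$ children and assign them pairwise distinct colours greedily. The $j$-th child has at least $(k-2)-(j-1)\ge 1$ options when $j\le t\le k-2$ (respectively $(k-1)-(j-1)\ge 1$ at the root). Every remaining child receives any colour from $L(w_i)\setminus\{c(u)\}$, which is nonempty because $k\ge 2$.

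This guarantees (C1) and (C3) directly. It also guarantees (C2) at every vertex: at the root the count is $\min\{m,r\}$, and at a non-root $u$ the count is $1+t=\min\{d(u),r\}$. A leaf $u$ has neighbourhood $\{p\}$, which is automatically fine.

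The only delicate point is the counting at a non-root vertex. We must confirm that excluding both $c(u)$ and $c(p)$ still leaves enough colours to produce $\min\{m,r-1\}$ distinct ones. This holds precisely because $m\le \Delta-1$, so $k-2\ge \min\{m,r-1\}$. The hypothesis $r\ge 2$ ensures $k\ge 2$, so that uncovered children still have a colour available. The same argument even works when $k-2=0$: in that case $t=0$ is forced, since either $r=1$ (excluded here) or $\Delta=1$ (so $G=K_2$).
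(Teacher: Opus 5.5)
Your proof is correct, and it is essentially the paper's argument unrolled. The paper deletes a leaf $u_0\neq v_0$, colours $G-u_0$ by induction, and gives $u_0$ a colour avoiding every colour on the closed neighbourhood of $u_1$ in $H$ when $d_{H}(u_1)<r$, and only $c(u_1)$ otherwise. If the leaves are re-added in BFS order from $v_0$, this is exactly your rule: the first $\min\{m,r\}$ children of the root (resp.\ the first $\min\{m,r-1\}$ children of a non-root vertex) avoid all colours already on their parent's closed neighbourhood, the rest avoid only the parent, and the count $d_G(u_1)\le\min\{r,\Delta(G)\}=k-1$ of forbidden colours is the same.
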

\noindent\textbf{Proof}. We prove by induction on $n=|V(G)|$. By definition, Lemma \ref{Lemma2.2'} holds for $n\in\{1,2\}$ trivially. Assume that $n\geq3$ and Lemma \ref{Lemma2.2'} holds for smaller values of $n$.
\par For any $k$-list $L$ of $G$, we arbitrarily pick a vertex $v_{0}$ of $G$ and a color $a_{0}\in L(v_{0})$. We are to prove that

\begin{equation}\label{eq:7'}
    \text{there exists an $(L,r)$-coloring $c$ of $G$ such that $c(v_{0})=a_{0}$.} \tag{5}
\end{equation}
\setcounter{equation}{5}

\par Since $G$ is a tree and $n\geq3$, there must be at least two leaves. Choose a leaf distinct from $v_{0}$, denoted by $u_{0}$, and its unique neighbor in $G$ is denoted by $u_{1}$. We assume $N_{G}(u_{1})=\{u_{0}, x_{1}, x_{2},\ldots, x_{t}\}$ where $t=d_{G}(u_{1})-1$ and $H=G-u_{0}$. For any $k$-list $L$ of graph $G$, let $L_{1}$ be the list of $L$ restricted to $V(H)$, and so $L_{1}$ is a $k$-list of $H$. Then for any $v\in V(H)$, $|L_{1}(v)|=k=\min\{r, \Delta(G)\}+1\geq\min\{r, \Delta(H)\}+1$. Since $v_{0}\neq u_{0}$, we must have $v_{0}\in V(H)$.
Since $|V(H)|<|V(G)|$, by induction, for the vertex $v_{0}$ of $H$, and for $a_{0}\in L(v)$, there exists an $(L_{1},r)$-coloring $c_{1}$ of $H$ such that $c_{1}(v_{0})=a_{0}$.

\noindent\textbf{Case 1:} $d_{H}(u_{1})< r$. Define:\\
\begin{equation}\label{eq:4'}
c(v)\in
\begin{cases}
L(u_{0})-\{c(u_{1}), c(x_{1}), c(x_{2}),\ldots, c(x_{t})\} & v=u_{0}; \\
\{c_{1}(v)\} & v\neq u_{0}. \\
\end{cases}
\end{equation}

\par Since $|\{c(u_{1}), c(x_{1}), c(x_{2}),\ldots, c(x_{t})\}|=t+1=d_{G}(u_{1})\leq\Delta(G)$ and $d_{G}(u_{1})=d_{H}(u_{1})+1\leq r$, it follows that $|\{c(u_{1}), c(x_{1}), c(x_{2}),\ldots, c(x_{t})\}|\leq \min\{r,\Delta(G)\}=k-1$. Therefore, we have $|L(u_{0})-\{c(u_{1}), c(x_{1}), c(x_{2}),\ldots, c(x_{t})\}|\geq 1$. Hence it is possible to define $c(u_{0})\in L(u_{0})-\{c(u_{1}), c(x_{1}), c(x_{2}),\ldots, c(x_{t})\}$.
For any $v\in V(G)$, by \text{(}\ref{eq:4'}\text{)}, we obtain a coloring $c$ that satisfies conditions $(C1)$ and $(C3)$. Furthermore, condition $(C2)$ is also satisfied since the colors on the vertices in the neighborhood of $u_{1}$ are different, and all other vertices have received at least $\min\{d_{G}(v), r\}$ colors in their neighborhood. Thus, \text{(}$\ref{eq:7'}$\text{)} holds.

\noindent\textbf{Case 2:} $d_{H}(u_{1})\geq r$. Define:\\
\begin{equation}\label{eq:5'}
c(v)\in
\begin{cases}
L(u_{0})-\{c(u_{1})\} & v=u_{0}; \\
\{c_{1}(v)\} & v\neq u_{0}. \\
\end{cases}
\end{equation}

\par Since $|L(u_{0})-\{c(u_{1})\}|\geq 1$, it is possible to define $c(u_{0})\in L(u_{0})-\{c(u_{1})\}$. For any $v\in V(G)$, by \text{(}\ref{eq:5'}\text{)}, we obtain a coloring $c$ that satisfies conditions $(C1)$ and $(C3)$. Furthermore, condition $(C2)$ is also satisfied since each vertex $v$ receives at least $\min\{d_{G}(v), r\}$ colors in its neighborhood. Thus, \text{(}$\ref{eq:7'}$\text{)} holds.

\par Thus, for any vertex $v_{0}$ of $G$, and for any $a_{0}\in L(v_{0})$, there exists an $(L,r)$-coloring $c$ of $G$ such that $c(v_{0})=a_{0}$.
$\hfill\square$

\begin{corollary}\label{corollary 2.2}
If $G$ is a tree, and $r\geq2$, then $\chi_{L,r}(G)=\min\{r,\Delta(G)\}+1$.
\end{corollary}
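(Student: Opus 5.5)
The corollary follows from two inequalities: a lower bound from (\ref{eq:2'}) and an upper bound from Lemma \ref{Lemma2.2'}.

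\emph{Lower bound.} By (\ref{eq:2'}), every graph satisfies $\chi_{L,r}(G)\geq \chi_r(G)\geq \min\{r,\Delta(G)\}+1$. This holds for every tree, including the trivial cases.

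\emph{Upper bound.} We must show that every $k$-list $L$ with $k=\min\{r,\Delta(G)\}+1$ admits an $(L,r)$-coloring. If $G$ has at least one vertex, pick any $v_0\in V(G)$ and any $a_0\in L(v_0)$; such an $a_0$ exists because $k\geq 1$. Since $r\geq 2$, Lemma \ref{Lemma2.2'} applies and yields an $(L,r)$-coloring $c$ with $c(v_0)=a_0$. In particular $G$ is $(L,r)$-colorable from every $k$-list, so $\chi_{L,r}(G)\leq k$.

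\emph{Degenerate cases.} The only point needing care is whether the lemma's induction base covers very small trees. For $|V(G)|=1$ we have $\Delta=0$ and $k=1$; coloring the single vertex from its list works, and no neighbourhood condition applies. For $|V(G)|=2$ we have $k=2$, and two distinct colors can be chosen from the two lists. In both cases the lemma's base case already handles this, and the bound $\chi_{L,r}\geq 1$ or $2$ is immediate. Combining the two inequalities gives $\chi_{L,r}(G)=\min\{r,\Delta(G)\}+1$.

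There is no real obstacle here, since the lemma does all the work. The only thing to check is that its hypothesis $r\geq2$ matches the corollary's hypothesis, which it does.
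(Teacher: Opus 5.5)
Your proof is correct and is essentially the paper's own argument: you read the lower bound off from (\ref{eq:2'}), and you get the upper bound by applying Lemma \ref{Lemma2.2'} to an arbitrary $k$-list with any chosen vertex and color. Your separate check of the one- and two-vertex trees is harmless but redundant, since the lemma's base case already covers them.
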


\noindent\textbf{Proof}. By \text{(}\ref{eq:2'}\text{)}, it follows that $\chi_{L,r}(G)\geq \min\{r,\Delta(G)\}+1$. Next, we only need to prove that $\chi_{L,r}(G)\leq \min\{r,\Delta(G)\}+1$. For any list $L$ with $|L|=\min\{r,\Delta(G)\}+1$, by Lemma \ref{Lemma2.2'}, we obtain a coloring $c$ that is indeed an $(L, r)$-coloring of the graph $G$. Hence $\chi_{L,r}(G)\leq \min\{r,\Delta(G)\}+1$. Thus, for $r\geq2$, we have $\chi_{L,r}(G)=\min\{r,\Delta(G)\}+1$.
$\hfill\square$

\par If $G$ is a trivial tree, then $\chi_{L}(G)=\chi_{L,1}(G)=1$. If $G$ is a nontrivial tree, then, by Theorem \ref{Theorem2.1'}, $\chi_{L}(G)=\chi_{L,1}(G)=2$. Therefore, if $G$ is a tree, then $\chi_{L}(G)=\chi_{L,1}(G)=\min\{1, \Delta(G)\}+1$. Thus, together with Corollary \ref{corollary 2.2}, we conclude that Theorem \ref{TH1.5'} is justified.

\section{The list $r$-hued chromatic number of a unicyclic graph}

\par Firstly, we give the following definitions. We say that a vertex $v$ of $G$ is a leaf neighbor of $u$ if $v\in N_{G}(u)$ and $d_{G}(v)=1$. We define $\mathcal{T}=\{T_{u}~|$ $T_{u}$ is a tree such that $T_{u}$ has a root vertex $u$ with degree at least three in $G$, and $u$ has at least two leaf neighbors$\}$.

\par For an integer $n\geq3$, define $\mathcal{G}_{n}=\{G~|$ $G$ is a unicyclic graph with $\delta(G)=1$, and with $C_{n}$ being the unique cycle of $G$ $\}$. For each graph $G \in \mathcal{G}_{n}$ with $C_{n}$ being the cycle in $G$, denote $V(C_{n})=\{v_{1},v_{2},\ldots,v_{n}\}$ and write $C_{n}=v_{1}v_{2}\cdots v_{n}v_{1}$. Define $M(G)=\{v~|~v\in V(C_{n})$ and $d_{G}(v)\geq3\}$. By definition, if $G \in \mathcal{G}_{n}$, then $M(G)\neq \emptyset$ and $\Delta(G)\geq3$. If $v_{i}\in M(G)$, then there exists a maximal tree $S_{i}$ of $G\setminus E(C_{n})$ with a root vertex $v_{i}$. Let $T_{v_{i}}=G[V(S_{i})\cup \{v_{i-1}, v_{i+1}\}]$ be the induced subgraph of $G$, $\{v_{i-1}, v_{i}, v_{i+1}\}\subseteq V(C_{n})$, and vertices $v_{i-1}$ and $v_{i+1}$ are leaf neighbors of $v_{i}$ in $T_{v_{i}}$. If $d_{G}(v_{i-1})=d_{G}(v_{i+1})=2$ for every $v_{i}\in M(G)$, then $M(G)$ is a stable set and $\alpha(C_{n})=\lfloor\frac{n}{2}\rfloor$. By definition, when $G\in \mathcal{G}_{n}$, $v_{i}\in M(G)$, $T_{v_{i}}\in \mathcal{T}$ (see Figure \ref{fig1}).

\par In this section, let $r$ be a positive integer and $G\in \mathcal{G}_{n}$. To prove Theorem \ref{TH1.6'}, we will discuss two cases when $r=1$ and when $r\geq 2$. In the case $r\geq 2$, we further distinguish between the cases when $n=5$ and when $n\neq5$ for detailed analysis.

\begin{figure}
\centering
\begin{tikzpicture}[x=1.00mm, y=1.00mm, inner xsep=0pt, inner ysep=0pt, outer xsep=0pt, outer ysep=0pt]
\path[line width=0mm] (123.04,-75.35) rectangle +(78.45,57.45);
\definecolor{L}{rgb}{0,0,0}
\path[line width=0.60mm, draw=L] (144.60,-41.34) circle (19.57mm);
\path[line width=0.60mm, draw=L] (160.56,-52.63) circle (0.85mm);
\path[line width=0.60mm, draw=L] (164.10,-40.46) circle (0.85mm);
\path[line width=0.60mm, draw=L] (160.25,-29.67) circle (0.85mm);
\path[line width=0.60mm, draw=L] (191.11,-36.41) circle (0.85mm);
\path[line width=0.60mm, draw=L] (178.07,-30.24) circle (0.89mm);
\path[line width=0.60mm, draw=L] (191.15,-22.74) circle (0.85mm);
\path[line width=0.60mm, draw=L] (165.01,-40.42) -- (177.22,-30.46);
\path[line width=0.60mm, draw=L] (165.14,-40.61) -- (177.14,-48.68);
\path[line width=0.60mm, draw=L] (178.66,-29.70) -- (190.41,-23.26);
\path[line width=0.60mm, draw=L] (178.87,-30.78) -- (190.11,-35.80);
\definecolor{F}{rgb}{0,0,0}
\path[line width=0.60mm, draw=L, fill=F] (177.50,-39.54) circle (0.50mm);
\path[line width=0.60mm, draw=L, fill=F] (177.50,-36.01) circle (0.50mm);
\path[line width=0.60mm, draw=L, fill=F] (177.50,-43.01) circle (0.50mm);
\path[line width=0.60mm, draw=L, fill=F] (191.48,-29.79) circle (0.50mm);
\path[line width=0.60mm, draw=L, fill=F] (191.48,-26.26) circle (0.50mm);
\path[line width=0.60mm, draw=L, fill=F] (191.48,-33.26) circle (0.50mm);
\draw(158.89,-41.69) node[anchor=base west]{\fontsize{9.92}{11.91}\selectfont $v_{i}$};
\draw(152.33,-52.60) node[anchor=base west]{\fontsize{9.92}{11.91}\selectfont $v_{i-1}$};
\draw(150.57,-31.34) node[anchor=base west]{\fontsize{9.92}{11.91}\selectfont $v_{i+1}$};
\definecolor{L}{rgb}{0,0,1}
\path[line width=0.60mm, draw=L, dash pattern=on 0.60mm off 0.50mm] (149.46,-57.69) [rotate around={360:(149.46,-57.69)}] rectangle +(50.02,37.79);
\definecolor{T}{rgb}{0,0,0.804}
\draw[T] (176.70,-63.01) node[anchor=base west]{\fontsize{9.92}{11.91}\selectfont \textcolor[rgb]{0, 0, 0.80392}{$T_{v_{i}}$}};
\definecolor{T}{rgb}{0,0,0}
\draw[T] (169.64,-72.60) node[anchor=base west]{\fontsize{9.92}{11.91}\selectfont \textcolor[rgb]{0, 0, 0}{G}};
\definecolor{L}{rgb}{0,0,0}
\path[line width=0.60mm, draw=L] (191.11,-54.99) circle (0.85mm);
\path[line width=0.60mm, draw=L] (178.07,-48.82) circle (0.89mm);
\path[line width=0.60mm, draw=L] (191.15,-41.32) circle (0.85mm);
\path[line width=0.60mm, draw=L] (178.66,-48.27) -- (190.41,-41.84);
\path[line width=0.60mm, draw=L] (178.87,-49.36) -- (190.11,-54.38);
\path[line width=0.60mm, draw=L, fill=F] (191.48,-48.37) circle (0.50mm);
\path[line width=0.60mm, draw=L, fill=F] (191.48,-44.83) circle (0.50mm);
\path[line width=0.60mm, draw=L, fill=F] (191.48,-51.84) circle (0.50mm);
\end{tikzpicture}%
\caption{The graph $T_{v_{i}}$}
\label{fig1}
\end{figure}

\par Let $r$ be a positive integer. For any graph $G$, let $k=k(r, G)=\min\{r,\Delta(G)\}+1$, and this definition will be consistently used henceforth. By Theorem \ref{TH1.5'}, for any $T_{v_{i}}\in \mathcal{T}$, $\chi_{L,r}(T_{v_{i}})=\min\{r,\Delta(T_{v_{i}})\}+1$.

\begin{lemma}\label{Lemma3.1'}
Let $T_{v_{i}}\in \mathcal{T}$, $k=k(r, T_{v_{i}})$, $v_{i-1}$ and $v_{i+1}$ be two leaf neighbors of $v_{i}$. For any $k$-list $L$ of $T_{v_{i}}$, each of the following holds.\\
$(i)$ If there exist $a_{1}$, $a_{2}$ and $a_{3}$ such that $a_{1}\in L(v_{i})$, $a_{2}\in L(v_{i-1})$ and $a_{3}\in L(v_{i+1})$, where $a_{1}, a_{2}$ and $a_{3}$ are mutually distinct, then $T_{v_{i}}$ has an $(L,r)$-coloring $c$ satisfying $c(v_{i})=a_{1}$, $c(v_{i-1})=a_{2}$ and $c(v_{i+1})=a_{3}$;\\
$(ii)$ If $d_{T_{v_{i}}}(v_{i})>r$, and there exist $a_{1}$ and $a_{2}$ such that $a_{1}\in L(v_{i})$, $a_{2}\in L(v_{i-1})\cap L(v_{i+1})$ and $a_{1}\neq a_{2}$, then $T_{v_{i}}$ has an $(L,r)$-coloring $c$ satisfying $c(v_{i})=a_{1}$ and $c(v_{i-1})=c(v_{i+1})=a_{2}$.
\end{lemma}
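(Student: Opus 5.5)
We prove both parts by building the coloring outward from the prescribed vertices, rather than by applying Lemma \ref{Lemma2.2'} to $T_{v_{i}}$ directly. That lemma fixes the color of only one vertex, and here we must fix the colors of $v_{i}$ and of both leaves $v_{i-1},v_{i+1}$.

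Write $N_{T_{v_{i}}}(v_{i})=\{v_{i-1},v_{i+1},y_{1},\ldots,y_{s}\}$, so $d_{T_{v_{i}}}(v_{i})=s+2$. Each $y_{j}$ is the root of a branch $B_{j}$, namely the component of $T_{v_{i}}-v_{i}$ containing $y_{j}$. We first fix $c(v_{i})=a_{1}$, $c(v_{i-1})=a_{2}$ and, in case $(i)$, $c(v_{i+1})=a_{3}$, or in case $(ii)$, $c(v_{i+1})=a_{2}$.

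Next we color $y_{1},\ldots,y_{s}$ greedily. Each $y_{j}$ must avoid $a_{1}$, which gives properness. It must also satisfy the $r$-hued condition at $v_{i}$, which requires $|c(N(v_{i}))|\geq\min\{s+2,r\}$. We choose the $y_{j}$ one at a time, each receiving a color different from $a_{1}$ and from all colors already used in $N(v_{i})$, until the number of distinct colors in $N(v_{i})$ reaches $\min\{s+2,r\}$. After that, we only require $c(y_{j})\neq a_{1}$.

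This is possible because $|L(y_{j})|=k\geq\min\{r,s+2\}+1$. When fewer than $\min\{s+2,r\}$ distinct colors are present in $N(v_{i})$, the forbidden set has size at most $\min\{s+2,r\}-1+1\leq k-1$. In case $(i)$ the leaves $v_{i\pm1}$ already contribute two distinct colors. In case $(ii)$ they contribute only one, so we need $s+1\geq r$ colors in total from one leaf color plus the $y_{j}$. This holds because $d(v_{i})=s+2>r$ gives $s\geq r-1$, so there are enough $y_{j}$ to supply the required distinct colors.

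Finally we extend into each branch $B_{j}$ with $y_{j}$ precolored. Consider the subtree $B_{j}^{+}=B_{j}+v_{i}y_{j}$, in which $v_{i}$ is a leaf. We need an $(L,r)$-coloring of $B_{j}^{+}$ that agrees with the colors already given to $v_{i}$ and $y_{j}$, and in which the conditions at $y_{j}$ and inside $B_{j}$ hold. The condition at $v_{i}$ is already secured.

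This is the main obstacle. Lemma \ref{Lemma2.2'} lets us prescribe the color of only one vertex, but here we prescribe an adjacent pair. We resolve it by rerunning the leaf-deletion induction from the proof of Lemma \ref{Lemma2.2'} with a precolored edge: the claim is that for any tree, any edge $uv$, and any distinct $a\in L(u)$, $b\in L(v)$, there is an $(L,r)$-coloring with $c(u)=a$ and $c(v)=b$. The induction is identical. We always delete a leaf $u_{0}\notin\{u,v\}$, which exists unless the tree is the single edge $uv$, in which case the claim is trivial. The recoloring step for $u_{0}$ uses the same count: at most $\min\{r,\Delta\}$ colors are forbidden, so it works exactly as in Cases 1 and 2 of that proof.

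Lists restricted to subtrees still have size $k\geq\min\{r,\Delta(B_{j}^{+})\}+1$. Gluing the colorings of the branches at $v_{i}$ gives a proper coloring, and every vertex other than $v_{i}$ has its whole neighborhood inside a single $B_{j}^{+}$ or is a leaf $v_{i\pm1}$. Hence the $r$-hued condition holds everywhere, which proves both $(i)$ and $(ii)$.
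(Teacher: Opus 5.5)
Your proof is correct, and it takes a genuinely different route from the paper's.

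The paper proves both parts by induction on $|V(T_{v_i})|$. It checks $K_{1,3}$ directly. Then, by appeal to Cases 1 and 2 of Lemma \ref{Lemma2.2'}, it deletes a leaf $u_0\notin\{v_{i-1},v_{i+1}\}$, colors $T_{v_i}-u_0$ by induction with all three prescribed colors kept, and puts $u_0$ back. You instead settle the $r$-hued condition at $v_i$ once, by a greedy choice on $y_1,\dots,y_s$. You then reduce the rest to a precolored-edge strengthening of Lemma \ref{Lemma2.2'}, applied separately to each branch $B_j^+$. Your counts all check out: $m+1\le\min\{s+2,r\}\le k-1$, $s\ge r-1$ in (ii), the existence of a leaf outside $\{u,v\}$, and $\Delta(B_j^+)\le\Delta(T_{v_i})$.

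What your route buys is that the hypothesis $d_{T_{v_i}}(v_i)>r$ in (ii) is used exactly where it is needed. It also avoids a weak point in the paper's induction for (ii). If the deleted leaf is adjacent to $v_i$ and $d(v_i)=r+1$, then $T_{v_i}-u_0$ no longer satisfies $d(v_i)>r$. Worse, it admits no $(L,r)$-coloring with $c(v_{i-1})=c(v_{i+1})$ at all, since $v_i$ would have $r$ neighbors carrying at most $r-1$ colors. This situation cannot be avoided for the star $K_{1,r+1}$ with $r\ge 3$: every deletable leaf is adjacent to $v_i$, and the $K_{1,3}$ base case does not cover it. Stating your auxiliary claim for lists of size at least $\min\{r,\Delta\}+1$ also cleanly handles the fact that $k(r,\cdot)$ can drop on subtrees, which the paper glosses over.

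The paper's version is shorter only because it reuses the proof of Lemma \ref{Lemma2.2'} verbatim, carrying the three prescribed vertices through the induction instead of introducing a new auxiliary statement.
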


\noindent\textbf{Proof}. We prove the lemma by induction on $|V(T_{v_{i}})|$. Since $T_{v_{i}}\in \mathcal{T}$, by the definition of $\mathcal{T}$, $|V(T_{v_{i}})|\geq4$. If $|V(T_{v_{i}})|=4$, then $T_{v_{i}}=K_{1,3}$ and $k=\min\{r, 3\}+1$. Define $N_{K_{1,3}}(v_{i})=\{v_{0}, v_{i-1}, v_{i+1}\}$.

\par Proof of $(i)$. Suppose that $L$ is an arbitrary $k$-list of $K_{1,3}$. We consider the following cases.

\par If $d_{T_{v_{i}}}(v_{i})=3>r$, then $k=\min\{r, 3\}+1=r+1$, and we define:

\begin{equation}\label{eq:3.1.1'}
c(v)\in
\begin{cases}
\{a_{1}\} & v=v_{i}; \\
\{a_{2}\} & v=v_{i-1}; \\
\{a_{3}\} & v=v_{i+1}; \\
L(v_{0})-\{a_{1}\} & v=v_{0}. \\
\end{cases}
\end{equation}

\par If $d_{T_{v_{i}}}(v_{i})=3\leq r$, then $k=\min\{r, 3\}+1=4$, and we define:

\begin{equation}\label{eq:3.1.2'}
c(v)\in
\begin{cases}
\{a_{1}\} & v=v_{i}; \\
\{a_{2}\} & v=v_{i-1}; \\
\{a_{3}\} & v=v_{i+1}; \\
L(v_{0})-\{a_{1}, a_{2}, a_{3}\} & v=v_{0}. \\
\end{cases}
\end{equation}

\par By \text{(}\ref{eq:3.1.1'}\text{)} and \text{(}\ref{eq:3.1.2'}\text{)}, $(i)$ of Lemma \ref{Lemma3.1'} holds for $|V(T_{v_{i}})|=4$.

\par Assume that $|V(T_{v_{i}})|\geq5$ and $(i)$ of Lemma \ref{Lemma3.1'} holds for smaller values of $|V(T_{v_{i}})|$. For any $k$-list $L$ of $T_{v_{i}}$, we need to prove that there exists an $(L,r)$-coloring $c$ of $T_{v_{i}}$ such that $c(v_{i})=a_{1}$, $c(v_{i-1})=a_{2}$ and $c(v_{i+1})=a_{3}$.

\par Since $T_{v_{i}}\in \mathcal{T}$ and $|V(T_{v_{i}})|\geq5$, by the definition of $\mathcal{T}$, $T_{v_{i}}$ must have at least three leaves. Thus $T_{v_{i}}$ has a leaf $u_{0}$ distinct from $v_{i-1}$ and $v_{i+1}$. Let $u_{1}$ be the unique neighbor of $u_{0}$ in $T_{v_{i}}$. Denote $N_{T_{v_{i}}}(u_{1})=\{u_{0}, x_{1}, x_{2},\ldots, x_{t}\}$ with $t=d_{T_{v_{i}}}(u_{1})-1$, and let $H=T_{v_{i}}-u_{0}$. For any $k$-list $L$ of graph $T_{v_{i}}$, let $L_{1}$ be the list of $L$ restricted to $H$. Then $L_{1}$ is a $k$-list of $H$. Since $u_{0}\notin \{v_{i-1}, v_{i}, v_{i+1}\}$, we must have $\{v_{i-1}, v_{i}, v_{i+1}\}\subseteq V(H)$.
Since $|V(H)|<|V(T_{v_{i}})|$, by induction, for the vertices $v_{i-1}, v_{i}, v_{i+1}\in V(H)$, and for $a_{1}\in L(v_{i})$, $a_{2}\in L(v_{i-1})$, $a_{3}\in L(v_{i+1})$, there exists an $(L_{1},r)$-coloring $c_{1}$ of $H$ such that $c_{1}(v_{i})=a_{1}$, $c_{1}(v_{i-1})=a_{2}$ and $c_{1}(v_{i+1})=a_{3}$.

\par Similar to the proofs for Case $1$ and Case $2$ of Lemma \ref{Lemma2.2'}, for any $k$-list $L$ of $T_{v_{i}}$, if there exist colors $a_{1}\in L(v_{i})$, $a_{2}\in L(v_{i-1})$, $a_{3}\in L(v_{i+1})$, where $a_{1}, a_{2}$ and $a_{3}$ are mutually distinct, then there exists an $(L,r)$-coloring $c$ of $T_{v_{i}}$ such that $c(v_{i})=a_{1}$, $c(v_{i-1})=a_{2}$ and $c(v_{i+1})=a_{3}$.

\par Proof of $(ii)$. Since $\Delta(T_{v_{i}})\geq d_{T_{v_{i}}}(v_{i})>r\geq1$, $k=\min\{r, \Delta(T_{v_{i}})\}+1=r+1$. Suppose that $|V(T_{v_{i}})|=4$. Then there exists a vertex $v_{0}\in V(T_{v_{i}})-V(C_{n})$ such that $T_{v_{i}}\cong K_{1,3}$ with $v_{i}$ being the degree $3$ vertex. Let $L$ be an arbitrary $k$-list of $K_{1,3}$, we consider the following cases.
\par When $r=1$, $k=r+1=2$, we define:

\begin{equation}\label{eq:3.3.1'}
c(v)\in
\begin{cases}
\{a_{1}\} & v=v_{i}; \\
\{a_{2}\} & v=v_{i-1}; \\
\{a_{2}\} & v=v_{i+1}; \\
L(v_{0})-\{a_{1}\} & v=v_{0}. \\
\end{cases}
\end{equation}

\par When $r=2$, $k=r+1=3$, we define:

\begin{equation}\label{eq:3.3.2'}
c(v)\in
\begin{cases}
\{a_{1}\} & v=v_{i}; \\
\{a_{2}\} & v=v_{i-1}; \\
\{a_{2}\} & v=v_{i+1}; \\
L(v_{0})-\{a_{1}, a_{2}\} & v=v_{0}. \\
\end{cases}
\end{equation}

\par By \text{(}\ref{eq:3.3.1'}\text{)} and \text{(}\ref{eq:3.3.2'}\text{)}, $(ii)$ of Lemma \ref{Lemma3.1'} holds for $|V(T_{v_{i}})|=4$.
\par Assume that $|V(T_{v_{i}})|\geq5$ and $(ii)$ of Lemma \ref{Lemma3.1'} holds for smaller values of $|V(T_{v_{i}})|$. Similar to the proof for $(i)$ of Lemma \ref{Lemma3.1'}, for any $k$-list $L$ of $T_{v_{i}}$, there exists an $(L,r)$-coloring $c$ of $T_{v_{i}}$ such that $c(v_{i})=a_{1}$, $c(v_{i-1})=c(v_{i+1})=a_{2}$.
$\hfill\square$

\begin{theorem}\label{Theorem3.3'}
If $G\in \mathcal{G}_{n}$, then
$$
\chi_{L}(G)=\chi_{L,1}(G)=
\begin{cases}
2 & n\equiv0(mod~2); \\
3 & n\equiv1(mod~2). \\
\end{cases}
$$
\end{theorem}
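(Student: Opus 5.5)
The plan is to prove matching lower and upper bounds, using Theorem \ref{Theorem2.2'} for the cycle and extending to the trees attached to it by the greedy leaf argument of Lemma \ref{Lemma2.2'} (case $r=1$).

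\emph{Lower bound.} Since $\chi_{L}(H)\leq\chi_{L}(G)$ for every subgraph $H$ of $G$, we have $\chi_{L}(G)\geq\chi_{L}(C_{n})$. By Theorem \ref{Theorem2.2'} this equals $2$ when $n$ is even and $3$ when $n$ is odd. Alternatively, for $n$ even, $\chi_{L}(G)\geq 2$ holds simply because $G$ has an edge.

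\emph{Upper bound.} Let $k=\chi_{L}(C_{n})$ and let $L$ be any $k$-list of $G$.
\begin{enumerate}
\item Restrict $L$ to $V(C_{n})$. By Theorem \ref{Theorem2.2'}, there is an $L$-coloring $c_{0}$ of $C_{n}$; it is proper since $r=1$.
\item Extend $c_{0}$ to the rest of $G$. Every edge of $G\setminus E(C_{n})$ lies in one of the trees $S_{i}$ rooted at $v_{i}\in M(G)$, and these trees meet $C_{n}$ only at their roots.
\item Process each $S_{i}$ outward from $v_{i}$ in breadth-first order. When a vertex $w\notin V(C_{n})$ is reached, exactly one neighbor of $w$ (its parent) is already colored, because $S_{i}$ is a tree and all other vertices of $S_{i}$ colored so far are on the path back to $v_{i}$ or in other branches.
\item Choose $c(w)\in L(w)-\{c(\text{parent}(w))\}$. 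This set is nonempty since $|L(w)|=k\geq2$.
\end{enumerate}
Equivalently, one can apply Lemma \ref{Lemma2.2'} in the form valid for $r=1$: each tree $S_{i}$ has an $L$-coloring with prescribed color $c_{0}(v_{i})$ at the root. That lemma is stated for $r\geq2$, but its Case 2 argument, which removes a leaf and avoids only the neighbor's color, works verbatim for $r=1$.

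\emph{Verification.} Condition $(C2)$ with $r=1$ only requires $|c(N_{G}(v))|\geq 1$ for every non-isolated vertex $v$, which is automatic. Condition $(C1)$ holds on $E(C_{n})$ by the choice of $c_{0}$, and on every tree edge by the greedy choice. Condition $(C3)$ holds by construction. Hence $G$ has an $(L,1)$-coloring, so $\chi_{L}(G)=\chi_{L,1}(G)\leq\chi_{L}(C_{n})$, which together with the lower bound gives the stated values.

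The step needing the most care is the decomposition claim: $G-E(C_{n})$ is a forest in which each component contains exactly one cycle vertex. This follows from unicyclicity, since any second cycle vertex in a component would create a second cycle, and it is what guarantees that each newly colored vertex has only one precolored neighbor. Beyond that, the argument is routine and no step should be a real obstacle.
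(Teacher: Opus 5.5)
Your proof is correct and has the same skeleton as the paper's: list-color $C_n$ by Theorem~\ref{Theorem2.2'}, then extend over the trees attached to the cycle. The difference lies in the extension step.

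The paper works with $T_{v_i}$, which contains $v_{i-1},v_{i+1}$ as leaves, and invokes Lemma~\ref{Lemma3.1'}. It prescribes the colors of $v_{i-1},v_i,v_{i+1}$ and uses part (i) or (ii) according to whether $c_0(v_{i-1})\neq c_0(v_{i+1})$. This is the template the paper reuses for $r\geq 2$, where the colors of the cycle neighbors of $v_i$ matter for (C2) at $v_i$.

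For $r=1$ that machinery is unnecessary, and your root-only greedy extension over $S_i$ is simpler. It also avoids a point the paper leaves implicit: Lemma~\ref{Lemma3.1'} is stated for $k(1,T_{v_i})=2$-lists, while the odd case uses $3$-lists. Your argument needs only $|L(w)|\geq 2$.

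Your decomposition argument (each component of $G-E(C_n)$ is a tree meeting $C_n$ in exactly one vertex) is the right justification, and it is what the paper's definition of $S_i$ tacitly relies on. Your lower bound via $\chi_L(C_n)\leq\chi_L(G)$ is equivalent to the paper's use of (\ref{eq:2'}) together with $\chi(G)\geq\chi(C_n)=3$ for odd $n$.
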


\noindent\textbf{Proof}. We consider the following cases.

\noindent\textbf{Case 1:} $n\equiv0(mod~2)$.
\par By \text{(}\ref{eq:2'}\text{)}, it follows that $\chi_{L}(G)\geq 2$. Next, we only need to prove that $\chi_{L}(G)\leq 2$.

\par By Theorem \ref{Theorem2.2'}, for every even integer $n\geq4$, $\chi_{L}(C_{n})=2$. For any $2$-list $L$ of $G$, there exists an $L$-coloring $c_{0}$ of $C_{n}$ such that $1\leq |c_{0}(N_{C_{n}}(v_{i}))|\leq2$ for every vertex $v_{i}\in M(G)$. Moreover, we have $d_{G}(v_{i})\geq3$. Since $r=1$, $d_{G}(v_{i})\geq3>1=r$. For any $v_{i}\in M(G)$, there exists an $L$-coloring $c_{i}$ of $T_{v_{i}}$ such that $c_{i}(v_{i})=c_{0}(v_{i})$, $c_{i}(v_{i-1})=c_{0}(v_{i-1})$ and $c_{i}(v_{i+1})=c_{0}(v_{i+1})$. The existence is guaranteed as follows: when $c_{0}(v_{i-1})\neq c_{0}(v_{i+1})$, it follows from $(i)$ of Lemma \ref{Lemma3.1'}; when $c_{0}(v_{i-1})=c_{0}(v_{i+1})$, it follows from $(ii)$ of Lemma \ref{Lemma3.1'}.

Define:

\begin{equation}\label{eq:8'}
c(v)=
\begin{cases}
c_{0}(v) & v\in V(C_{n}); \\
c_{i}(v) & v\in V(T_{v_{i}}). \\
\end{cases}
\end{equation}

\par By the choices of $c_{0}$ and $c_{i}$, for each $i$, a coloring $c$ of $G$ defined by \text{(}\ref{eq:8'}\text{)} satisfies conditions $(C1)$ and $(C3)$. Thus $c$ is indeed an $(L, 1)$-coloring of the graph $G$. Hence $\chi_{L}(G)\leq 2$. Therefore, we have $\chi_{L}(G)=2$.

\noindent\textbf{Case 2:} $n\equiv1(mod~2)$.
\par Since $G\in \mathcal{G}_{n}$ and $C_{n}$ is the unique cycle of $G$, by \text{(}\ref{eq:3'}\text{)}, $\chi(G)\geq \chi(C_{n})=3$. Thus by \text{(}\ref{eq:2'}\text{)}, we have $\chi_{L}(G)\geq\chi(G)\geq3$. Next, we only need to prove that $\chi_{L}(G)\leq 3$.

\par By Theorem \ref{Theorem2.2'}, for every odd integer $n\geq3$, $\chi_{L}(C_{n})=3$. For any $3$-list $L$ of $G$, there exists an $L$-coloring $c_{0}$ of $C_{n}$ such that $1\leq |c_{0}(N_{C_{n}}(v_{i}))|\leq2$ for every vertex $v_{i}\in M(G)$. Similarly, for any $v_{i}\in M(G)$, by Lemma \ref{Lemma3.1'}, there exists an $L$-coloring $c_{i}$ of $T_{v_{i}}$ such that $c_{i}(v_{i})=c_{0}(v_{i})$, $c_{i}(v_{i-1})=c_{0}(v_{i-1})$ and $c_{i}(v_{i+1})=c_{0}(v_{i+1})$. As $c_{0}$ and $c_{i}$ agree on these vertices, by \text{(}\ref{eq:8'}\text{)}, we obtain a coloring $c$ of $G$ that satisfies conditions $(C1)$ and $(C3)$. Thus $c$ is indeed an $L$-coloring of the graph $G$. Hence $\chi_{L}(G)\leq 3$. Therefore, we have $\chi_{L}(G)=3$.
$\hfill\square$

\begin{lemma}\label{3.1-3.2}
Let $G\in \mathcal{G}_{n}$, $r\geq1$ and $l\geq k$. Suppose that for any $l$-list $L$ of $G$, there exists an $(L,1)$-coloring $c_{0}$ of $C_{n}$ such that $|c_{0}(N_{C_{n}}(v_{i}))|=2$ for every vertex $v_{i}\not\in M(G)$. If the coloring $c$ of $G$ is obtained by extending the $(L,1)$-coloring $c_{0}$ using Lemma \ref{Lemma3.1'} and \text{(}\ref{eq:8'}\text{)}, then $c$ is an $(L,r)$-coloring of the graph $G$. Hence $\chi_{L,r}(G)\leq l$.
\end{lemma}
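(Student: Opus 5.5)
The plan is to fix an arbitrary $l$-list $L$ of $G$ and take the $(L,1)$-coloring $c_{0}$ of $C_{n}$ supplied by the hypothesis. For each $v_{i}\in M(G)$ we extend $c_{0}$ to $T_{v_{i}}$ by Lemma \ref{Lemma3.1'}, and then glue everything via (\ref{eq:8'}). Lemma \ref{Lemma3.1'} is stated for $k$-lists, so first we shrink $L$. On each $T_{v_{i}}$ we pass to a sublist $L'\subseteq L$ of size $k(r,T_{v_{i}})\le k\le l$. We choose it so that $L'(v_{i-1}),L'(v_{i}),L'(v_{i+1})$ still contain $c_{0}(v_{i-1}),c_{0}(v_{i}),c_{0}(v_{i+1})$, which is possible since any list of size at least $1$ can be required to keep one prescribed color.

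If $c_{0}(v_{i-1})\neq c_{0}(v_{i+1})$, the three cycle colors at $v_{i-1},v_{i},v_{i+1}$ are mutually distinct, because $c_{0}$ is proper; then part $(i)$ of Lemma \ref{Lemma3.1'} applies. If $c_{0}(v_{i-1})=c_{0}(v_{i+1})$, we want part $(ii)$, which needs $d_{T_{v_{i}}}(v_{i})>r$. This is the main obstacle. When $d_{G}(v_{i})\le r$, the $r$-hued condition at $v_{i}$ requires all neighbours of $v_{i}$ to get distinct colors, and that is impossible if its two cycle neighbours share a color. My first attempt is to note that the hypothesis constrains only vertices outside $M(G)$, so in this subcase I would read the construction as requiring $c_{0}$ to give $v_{i-1},v_{i+1}$ distinct colors whenever $d_{G}(v_{i})\le r$. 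Failing that, I would argue the case $c_{0}(v_{i-1})=c_{0}(v_{i+1})$ with $d_{G}(v_{i})\le r$ cannot occur under the intended use of the lemma. This gap must be flagged, and the statement implicitly assumes the extension via Lemma \ref{Lemma3.1'} succeeds. Granting this, each extension $c_{i}$ exists and agrees with $c_{0}$ on $\{v_{i-1},v_{i},v_{i+1}\}$. Distinct trees $T_{v_{i}}$ share only cycle vertices, on which they agree with $c_{0}$, so (\ref{eq:8'}) defines $c$ consistently.

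Next we verify $(C1)$, $(C2)$ and $(C3)$ for $c$. $(C3)$ and $(C1)$ are immediate: every edge of $G$ lies either in $C_{n}$, where $c_{0}$ is proper, or in some $S_{i}\subseteq T_{v_{i}}$, where $c_{i}$ is proper. For $(C2)$, take a vertex $v$ and split into three cases.
\begin{enumerate}
\item If $v\notin V(C_{n})$, then $v$ lies in a unique $S_{i}$ and $N_{G}(v)=N_{T_{v_{i}}}(v)$, so $(C2)$ is inherited from $c_{i}$.
\item If $v=v_{i}\in M(G)$, then $N_{G}(v_{i})=N_{T_{v_{i}}}(v_{i})$, since $v_{i-1},v_{i+1}$ were added to $T_{v_{i}}$; again $(C2)$ is inherited.
\item If $v\in V(C_{n})\setminus M(G)$, then $d_{G}(v)=2$, and the hypothesis gives $|c(N_{G}(v))|=|c_{0}(N_{C_{n}}(v))|=2\ge\min\{2,r\}$.
\end{enumerate}
Thus $c$ is an $(L,r)$-coloring. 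Since $L$ was an arbitrary $l$-list, $\chi_{L,r}(G)\le l$.
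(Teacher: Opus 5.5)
Your route is the same as the paper's: extend $c_0$ on each $T_{v_i}$ by Lemma~\ref{Lemma3.1'}, glue with (\ref{eq:8'}), and check $(C2)$ separately at degree-$2$ cycle vertices and at vertices whose whole neighbourhood lies in a single $T_{v_i}$. Your version is more careful than the paper's in two places. First, you shrink $L$ to sublists of size $k(r,T_{v_i})$ before invoking Lemma~\ref{Lemma3.1'}. Second, you check $(C2)$ at $v_i\in M(G)$ inside the tree it roots. The paper's ``$u\in T_{v_i}$ and $d_G(u)=d_{T_{v_i}}(u)$'' is literally false for $u=v_{i\pm1}$, which are leaves of $T_{v_i}$.

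The gap you flag is real, and the paper's proof has the same hole: it simply asserts that Lemma~\ref{Lemma3.1'} supplies $c_i$. The gap cannot be closed from the stated hypotheses, because the lemma as written is false. Take $G\in\mathcal{G}_5$ obtained from $C_5$ by attaching one pendant leaf to each cycle vertex, with $r=3$. Then $\Delta(G)=3$, $k=4$ and $M(G)=V(C_5)$; take $l=4$.
\begin{itemize}
\item The hypothesis holds: the condition on cycle vertices outside $M(G)$ is vacuous, and $C_5$ is $3$-choosable.
\item Every cycle vertex has degree $3=r$, so an $(L,3)$-coloring must give the two cycle neighbours of each $v_j$ distinct colours.
\item Together with properness, all five cycle vertices must receive distinct colours, which is impossible when every list is $\{1,2,3,4\}$.
\end{itemize}
Hence $\chi_{L,3}(G)\ge 5>l$, as Theorem~\ref{Th3.5'}(ii-1) itself says. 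Reading the lemma as presupposing that the extension exists rescues its first sentence, but not the conclusion $\chi_{L,r}(G)\le l$.

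So your second fallback (the bad case ``cannot occur'') is only a remark about how the lemma is used, not a proof of it. The correct repair is your first option: add to the hypothesis that $c_0(v_{i-1})\neq c_0(v_{i+1})$ for every $v_i\in M(G)$ with $d_G(v_i)\le r$. Then part (i) or part (ii) of Lemma~\ref{Lemma3.1'} always applies, and your proof is complete.

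Every invocation of the lemma in the paper meets this stronger hypothesis, so nothing downstream is lost. In each use, one of the following holds:
\begin{itemize}
\item $c_0$ is an $(L,r)$-coloring of $C_n$ with $r\ge 2$;
\item $r\le 2<3\le d_G(v_i)$ for all $v_i\in M(G)$;
\item in the proof of Theorem~\ref{Th3.5'} with $r=3$ and $d_G(v_1)\ge 4$, the only repeated neighbour pair sits at $v_1$, where $d_G(v_1)>r$.
\end{itemize}
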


\noindent\textbf{Proof}. For any $l$-list $L$ of $G$, we have $c_{0}$ is an $(L, 1)$-coloring of $C_{n}$. For any $v_{i}\in M(G)$, by Lemma \ref{Lemma3.1'}, there exists an $(L,r)$-coloring $c_{i}$ of $T_{v_{i}}$ such that $c_{i}(v_{i})=c_{0}(v_{i})$, $c_{i}(v_{i-1})=c_{0}(v_{i-1})$ and $c_{i}(v_{i+1})=c_{0}(v_{i+1})$.

\par For any $v\in V(G)$, by \text{(}\ref{eq:8'}\text{)}, we obtain a coloring $c$ of $G$ that satisfies conditions $(C1)$ and $(C3)$. For any $G\in \mathcal{G}_{n}$, define $N(G)=\{u|u\in V(C_{n})~and~u\not\in M(G)\}$. If $u\in N(G)$, then $d_{G}(u)=2$ and $|c_{0}(N_{C_{n}}(u))|=2$. Since $r\geq1$, it follows that $u$ receives at least $\min\{d_{G}(u), r\}$ colors in its neighborhood. If $u\not\in N(G)$, then $u\in T_{v_{i}}$ and $d_{G}(u)=d_{T_{v_{i}}}(u)$. Since $c_{i}$ is an $(L,r)$-coloring of $T_{v_{i}}$, it follows that vertex $u$ receives at least $\min\{d_{G}(u), r\}$ colors in its neighborhood. Hence, the coloring $c$ of $G$ also satisfies condition $(C2)$.
Thus $c$ is indeed an $(L,r)$-coloring of the graph $G$. Hence $\chi_{L,r}(G)\leq l$.
$\hfill\square$

\begin{lemma}\label{claim2'}
 Let $t$ be an integer with $t\geq \max\{5, k\}$. If $G\in \mathcal{G}_{n}$, then $\chi_{L,r}(G)\leq t$.
\end{lemma}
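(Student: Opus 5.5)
The plan is to apply Lemma \ref{3.1-3.2} with $l=t$. Since $t\geq k$, it suffices to show the following: for every $t$-list $L$ of $G$ there is an $(L,1)$-coloring $c_{0}$ of $C_{n}$ such that $|c_{0}(N_{C_{n}}(v_{i}))|=2$ for every $v_{i}\in V(C_{n})$. In other words, we need a proper coloring of the square $C_{n}^{2}$ from the lists $L$ restricted to $V(C_{n})$. Requiring this at every cycle vertex, and not only at those outside $M(G)$, is harmless. Lemma \ref{Lemma3.1'}(i) then applies to every $v_{i}\in M(G)$: the three colors $c_{0}(v_{i-1}),c_{0}(v_{i}),c_{0}(v_{i+1})$ are mutually distinct and lie in the $t$-lists, and restricting to a sub-list of size $k(r,T_{v_i})$ containing them (possible since $t\geq k\geq k(r,T_{v_{i}})$) gives the coloring of $T_{v_{i}}$. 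Lemma \ref{3.1-3.2} then gives $\chi_{L,r}(G)\leq t$.

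So the task reduces to showing that $C_{n}^{2}$ is $t$-choosable whenever $t\geq5$. For $n\in\{3,4,5\}$, $C_{n}^{2}=K_{n}$, which is $n$-choosable with $n\leq5\leq t$; colour greedily. For $n\geq6$, $C_{n}^{2}$ is $4$-regular. I will colour greedily along the cycle, handling the closing of the cycle by a degeneracy argument. Order the vertices as $v_{1},v_{2},\ldots,v_{n}$ and colour in that order. When $v_{j}$ is coloured with $3\leq j\leq n-2$, its already-coloured neighbours in $C_{n}^{2}$ are only $v_{j-1},v_{j-2}$. For $j\le 2$ there are at most $1$, and when $j=1$ there are none. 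For $j=n-1$ they are $v_{n-2},v_{n-3},v_{1}$, three in total. For $j=n$ they are $v_{n-1},v_{n-2},v_{1},v_{2}$, four in total.

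Thus every vertex has at most $4$ previously coloured neighbours in $C_{n}^{2}$. With $|L(v)|\geq5$, a colour always remains available, so the greedy procedure succeeds. This is exactly where the hypothesis $t\geq5$ is used; it is natural, since $C_{5}^{2}=K_{5}$ forces $5$ colours.

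The main point to check carefully is the compatibility of the cycle colouring with the tree extensions. The extension via (\ref{eq:8'}) keeps $c_{0}$ on $V(C_{n})$. Each vertex of $C_{n}\setminus M(G)$ has degree $2$ in $G$, and its two neighbours receive distinct colours, so it meets (C2) for every $r$. The vertices of $M(G)$ and the tree vertices are handled by the $(L,r)$-colorings $c_{i}$ of the $T_{v_{i}}$. This is precisely the content of Lemma \ref{3.1-3.2}, so no further work is needed once the square colouring exists.
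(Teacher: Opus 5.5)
Your proof is correct and takes the same route as the paper. You obtain an $L$-coloring of $C_n$ in which every cycle vertex sees two distinct colours, extend it to each $T_{v_i}$ by Lemma~\ref{Lemma3.1'}(i), and conclude with Lemma~\ref{3.1-3.2}. The only differences are two small refinements. First, you justify the cycle step directly ($\Delta(C_n^2)\le 4$, so greedy colouring from $5$-lists works) instead of citing Theorem~\ref{theorem1.2'}, which keeps the argument self-contained and independent of $r\geq 2$. Second, you spell out the reduction from $t$-lists to $k(r,T_{v_i})$-lists via sublists, which the paper leaves implicit.
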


\noindent\textbf{Proof}. By Theorem \ref{theorem1.2'}, $\chi_{L,r}(C_{n})\leq5$. Since $t\geq5$, for any $t$-list $L$ of $G$, there exists an $(L,r)$-coloring $c_{0}$ of $C_{n}$ such that $|c_{0}(N_{C_{n}}(v_{i}))|=2$ for every vertex $v_{i}\in M(G)$. Moreover, since $t\geq k$, for any $v_{i}\in M(G)$, by $(i)$ of Lemma \ref{Lemma3.1'}, there exists an $(L,r)$-coloring $c_{i}$ of $T_{v_{i}}$ such that $c_{i}(v_{i})=c_{0}(v_{i})$, $c_{i}(v_{i-1})=c_{0}(v_{i-1})$ and $c_{i}(v_{i+1})=c_{0}(v_{i+1})$.
As $c_{0}$ and $c_{i}$ agree on these vertices for each $i$, we apply \text{(}\ref{eq:8'}\text{)} to obtain an $(L, r)$-coloring $c$ of $G$. By Lemma \ref{3.1-3.2} with $|L|=t$, we have $\chi_{L,r}(G)\leq t$, and so Lemma \ref{claim2'} is proved.
$\hfill\square$

\begin{theorem}\label{Th3.5'}
If $G\in \mathcal{G}_{5}$ and $r\geq2$, then $\chi_{L,r}(G)\in \{k,k+1\}$, and each of the following holds.\\
$(i)$ $\chi_{L,2}(G)=k+1$ if and only if for every vertex $v_{i}\in M(G)$, $d_{G}(v_{i-1})=d_{G}(v_{i+1})=2$;\\
$(ii)$ Suppose that $r\geq3$. Then $\chi_{L,r}(G)=k+1$ if and only if one of the following holds.\\
\indent $(ii$-$1)$ $\Delta(G)=3$;\\
\indent $(ii$-$2)$ $r=3$ and $d_{G}(v_{i})\leq3$ for every vertex $v_{i}\in V(C_{5})$.
\end{theorem}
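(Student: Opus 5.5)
The plan is to reduce everything to colouring the cycle $C_{5}$ and then extend by Lemma \ref{3.1-3.2}. The lower bound $\chi_{L,r}(G)\ge k$ is (\ref{eq:2'}). For the upper bounds I would fix an $l$-list $L$ and look for an $L$-colouring $c_{0}$ of $C_{5}$ satisfying two conditions. First, each $v_{i}\notin M(G)$ must see two distinct colours on $v_{i-1},v_{i+1}$. Second, each $v_{i}\in M(G)$ with $d_{G}(v_{i})\le r$ must also see two distinct colours on its cycle neighbours, since $v_{i}$ then needs all neighbours distinct.

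A vertex $v_{i}\in M(G)$ with $d_{G}(v_{i})>r$ may have $c_{0}(v_{i-1})=c_{0}(v_{i+1})$, because Lemma \ref{Lemma3.1'}(ii) applies to it. Lemma \ref{Lemma3.1'} is stated for $k(r,T_{v_{i}})$-lists, and $k(r,T_{v_i})\le l$; I would note that it still applies when the lists are larger, by shrinking each list to a sublist containing the prescribed colours. Lemma \ref{3.1-3.2} then glues the pieces into an $(L,r)$-colouring.

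These requirements say exactly that $c_{0}$ is a proper $L$-colouring of an auxiliary graph $A$ on $V(C_{5})$. The graph $A$ is $C_{5}^{2}=K_{5}$ with the edge $v_{i-1}v_{i+1}$ deleted for each $v_{i}\in M(G)$ with $d_{G}(v_{i})>r$. Call such $v_{i}$ \emph{relaxed}. So the whole theorem reduces to the choosability of $K_{5}$ minus a set $R$ of "antipodal" edges, one for each relaxed vertex.

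\textbf{Membership in $\{k,k+1\}$.} If $k+1\ge5$, Lemma \ref{claim2'} gives $\chi_{L,r}(G)\le k+1$. Otherwise $r=2$ and $k=3$ (since $\Delta(G)\ge3$). Every vertex of $M(G)$ then has degree $\ge3>r$, so $R\ne\emptyset$. Then $A$ is a subgraph of $K_{5}$ minus an edge, which is $3$-degenerate, hence $4$-choosable, and $\chi_{L,2}(G)\le 4=k+1$.

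\textbf{Part (i), $r=2$, $k=3$.} All of $M(G)$ is relaxed. First suppose some $v_{i}\in M(G)$ has a neighbour on the cycle, say $v_{i+1}$, also in $M(G)$. Then $R$ contains the disjoint edges $v_{i-1}v_{i+1}$ and $v_{i}v_{i+2}$, so $A$ is a subgraph of $K_{2,2,1}$, which is $3$-choosable (it is a subgraph of $K_{2,2,2}$, which is $3$-choosable). Hence $\chi_{L,2}(G)=k$. Conversely, suppose every vertex of $M(G)$ has both cycle neighbours of degree $2$. Then $M(G)$ is stable in $C_{5}$, so $|M(G)|\le2$. If $|M(G)|=2$, say $M(G)=\{v_{1},v_{3}\}$, the deleted edges $v_{5}v_{2}$ and $v_{2}v_{4}$ share $v_{2}$, and $\{v_{1},v_{3},v_{4},v_{5}\}$ is a $K_{4}$ in $A$. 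If $|M(G)|=1$, $A=K_{5}-e$ again contains a $K_{4}$. Giving every vertex of $G$ the list $\{1,2,3\}$ forces the cycle colouring to be a proper colouring of $A$, which is impossible. So $\chi_{L,2}(G)=k+1$.

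\textbf{Part (ii), $r\ge3$, $k\ge4$.} If $k\ge5$, Lemma \ref{claim2'} gives $\chi_{L,r}(G)=k$. Here $k\ge5$ means $\min\{r,\Delta(G)\}\ge4$, which excludes both (ii-1) and (ii-2). So assume $k=4$, i.e.\ $\min\{r,\Delta(G)\}=3$.

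If $R\ne\emptyset$, then $A\subseteq K_{5}-e$ is $4$-choosable, so $\chi_{L,r}(G)=k$. If $R=\emptyset$, then $A=K_{5}$. Taking the constant list $\{1,2,3,4\}$ on all of $G$ forces the five cycle vertices to receive pairwise distinct colours, which is impossible. So $\chi_{L,r}(G)=k+1$.

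It remains to translate $R=\emptyset$, i.e.\ no vertex of $C_{5}$ has degree $>r$, into (ii-1) or (ii-2). If $\Delta(G)=3$, all degrees are $\le 3\le r$, which is (ii-1). If $\Delta(G)\ge4$, then $r=3$, and $R=\emptyset$ means every cycle vertex has degree $\le3$, which is (ii-2). Conversely, (ii-1) or (ii-2) gives $k=4$ and $R=\emptyset$.

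The main obstacle I expect is the bookkeeping in the extension step rather than the choosability facts. One must check that the relaxed/non-relaxed dichotomy matches the hypotheses of Lemma \ref{Lemma3.1'}(i),(ii) exactly: (ii) requires $d_{T_{v_{i}}}(v_{i})>r$ together with $a_{1}\ne a_{2}$, and the latter follows from properness of $c_{0}$. One must also check that the hidden tree vertices impose no further constraint on $C_{5}$, which holds because each $T_{v_{i}}$ only interacts with the cycle through $v_{i-1},v_{i},v_{i+1}$. The $3$-choosability of $K_{2,2,1}$ I would verify directly, using that the apex vertex has degree $4$ and the two pairs of nonadjacent vertices can be handled by case analysis on whether their lists intersect.
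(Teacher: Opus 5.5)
Your proof is correct, but it is organized differently from the paper's.

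\textbf{Your route.} You reduce the whole theorem to the choosability of the auxiliary graph $A$, namely $C_5^2=K_5$ with the edge $v_{i-1}v_{i+1}$ deleted for each $v_i\in M(G)$ with $d_G(v_i)>r$. You check that these constraints are both necessary for any $(L,r)$-colouring and sufficient for the extension via Lemma~\ref{Lemma3.1'} and Lemma~\ref{3.1-3.2}. So in effect $\chi_{L,r}(G)=\max\{k,\mathrm{ch}(A)\}$. Every case then follows from Lemma~\ref{claim2'} (when five colours are available) plus three generic facts:
\begin{itemize}
\item $K_5-e$ is $3$-degenerate, hence $4$-choosable;
\item $K_{1,2,2}$ is $3$-choosable;
\item $K_4$ (resp.\ $K_5$) has no proper colouring from the constant list $\{1,2,3\}$ (resp.\ $\{1,2,3,4\}$).
\end{itemize}

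\textbf{The paper's route.} The paper works case by case with explicit greedy orders for colouring $C_5$. It uses the system (\ref{eqa}) for the $k+1$ bound and for Claim~\ref{claim4'}. Case~1.2 is split according to whether $L(v_1)\neq L(v_5)$, $L(v_4)\neq L(v_5)$, or $L(v_1)=L(v_4)=L(v_5)$; in the last case it sets $c_0(v_4)=c_0(v_1)$. The lower bounds come from non-list contradiction arguments on $(3,2)$- and $(4,3)$-colourings, and from $\chi(G^2)\ge\chi(C_5^2)=5$.

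\textbf{Comparison.} Those greedy orders are exactly explicit $L$-colourings of your $A$; Case~1.2 is a hands-on proof that $K_{1,2,2}$ is $3$-choosable. So the underlying content is the same. Your framing buys three things:
\begin{itemize}
\item one criterion that settles both directions of (i) and (ii) at once;
\item no case split on equalities of lists;
\item an explicit statement that a vertex $v_i\in M(G)$ with $d_G(v_i)\le r$ must see two colours on the cycle. The paper's Lemma~\ref{3.1-3.2} leaves this implicit, although the colourings used in this theorem happen to satisfy it.
\end{itemize}
The paper's route, in turn, needs no appeal to choosability results.

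\textbf{A simplification.} The $3$-choosability of $K_{1,2,2}$ needs neither $K_{2,2,2}$ nor a case analysis. Colour the apex first; what remains is a $4$-cycle with lists of size at least $2$, which is $2$-choosable.
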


\noindent\textbf{Proof}. For any $k$-list $L$ of $G$ and $M(G)=\{v~|~v\in V(C_{5})$ and $d_{G}(v)\geq3\}$, we assume that $C_{5}=v_{1}v_{2}\cdots v_{5}v_{1}$. We consider the following cases.

\noindent\textbf{Case 1:} $r=2$.

\par Since $\Delta(G)\geq3$ and $r=2$, we have $k+1=\min\{{r, \Delta(G)}\}+1+1=4$. For every vertex $v_{i}\in M(G)$, we have $d_{G}(v_{i})\geq3$. Since $r=2$, $d_{G}(v_{i})\geq3>2=r$. Without loss of generality assume that $v_{1}\in M(G)$. For any $4$-list $L$ of $G$, we obtain the coloring $c$ of $G$ as follows: As $|L(v_{i})|=4$, the vertices of $C_{5}$ can be colored by setting.

\begin{equation}\label{eqa}
\left\{
\begin{array}{ll}
c_{0}(v_{1})\in L(v_{1});\\
c_{0}(v_{2})\in L(v_{2})-\{c_{0}(v_{1})\};\\
c_{0}(v_{3})\in L(v_{3})-\{c_{0}(v_{1}), c_{0}(v_{2})\};\\
c_{0}(v_{4})\in L(v_{4})-\{c_{0}(v_{1}), c_{0}(v_{2}), c_{0}(v_{3})\};\\
c_{0}(v_{5})\in L(v_{5})-\{c_{0}(v_{1}), c_{0}(v_{3}), c_{0}(v_{4})\}.
\end{array}
\right.
\end{equation}

\par Since $|L(v_{4})-\{c_{0}(v_{1}), c_{0}(v_{2}), c_{0}(v_{3})\}|\geq1$ and $|L(v_{5})-\{c_{0}(v_{1}), c_{0}(v_{3}), c_{0}(v_{4})\}|\geq1$, it is possible to define $c_{0}(v_{4})\in L(v_{4})-\{c_{0}(v_{1}), c_{0}(v_{2}), c_{0}(v_{3})\}$ and $c_{0}(v_{5})\in L(v_{5})-\{c_{0}(v_{1}), c_{0}(v_{3}), c_{0}(v_{4})\}$.

\par For any $4$-list $L$ of $G$, when the above operations are finished, we obtain an $(L, 1)$-coloring $c_{0}$ of $C_{5}$ such that $1\leq |c_{0}(N_{C_{5}}(v_{1}))|\leq2$ and $|c_{0}(N_{C_{5}}(v_{i}))|=2$ $(i\neq1)$ for every vertex $v_{i}\in M(G)$. For any $v_{i}\in M(G)$, by Lemma \ref{Lemma3.1'}, there exists an $(L,2)$-coloring $c_{i}$ of $T_{v_{i}}$ such that $c_{i}(v_{i})=c_{0}(v_{i})$, $c_{i}(v_{i-1})=c_{0}(v_{i-1})$ and $c_{i}(v_{i+1})=c_{0}(v_{i+1})$.

\par As for each $i$, $c_{0}$ and $c_{i}$ agree on $\{v_{i-1}, v_{i}, v_{i+1}\}$, we apply \text{(}\ref{eq:8'}\text{)} to obtain an $L$-coloring $c$ of $G$. By Lemma \ref{3.1-3.2} with $|L|=4$, we have $\chi_{L,r}(G)\leq 4$. By \text{(}\ref{eq:2'}\text{)}, $\chi_{L,r}(G)\geq 3$. Thus $\chi_{L,r}(G)\in \{3,4\}=\{k, k+1\}$. We now prove the following necessary and sufficient condition for $\chi_{L,r}(G)=k+1$.

\noindent\textbf{Case 1.1:} For every $v_{i}\in M(G)$, $d_{G}(v_{i-1})=d_{G}(v_{i+1})=2$.

\begin{claim}\label{claim3.5'}
If $G\in \mathcal{G}_{5}$, for any $v_{i}\in M(G)$, $d_{G}(v_{i-1})=d_{G}(v_{i+1})=2$, then $\chi_{2}(G)\geq 4$.
\end{claim}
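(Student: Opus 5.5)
Since $k=\min\{2,\Delta(G)\}+1=3$ here, I will argue by contradiction. Suppose $c$ is a $(3,2)$-coloring of $G$ with colors $\{1,2,3\}$, and look only at the restriction of $c$ to the cycle $C_{5}=v_{1}v_{2}v_{3}v_{4}v_{5}v_{1}$. The plan is to show that the hypothesis on $M(G)$ forces $c$ to be a $(3,2)$-coloring of $C_{5}$ itself, and then to contradict Theorem \ref{theorem1.2'}, which gives $\chi_{2}(C_{5})=5>3$.

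\textbf{Step 1 (local constraints on the cycle).} Condition $(C1)$ for $G$ restricts to $(C1)$ for $C_{5}$. For $(C2)$, take any cycle vertex $v_{j}$. If $v_{j}\notin M(G)$, then $d_{G}(v_{j})=2$ and $N_{G}(v_{j})=\{v_{j-1},v_{j+1}\}$. So $(C2)$ in $G$ with $r=2$ directly gives $c(v_{j-1})\neq c(v_{j+1})$. If $v_{j}\in M(G)$, this implication is not automatic, because $v_{j}$ may see two colors using its tree neighbours. The hypothesis takes over here: both cycle neighbours of $v_{j}$ have degree $2$, so neither lies in $M(G)$. Hence $M(G)$ is a stable set in $C_{5}$, and $|M(G)|\le 2$.

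\textbf{Step 2 (main obstacle: vertices of $M(G)$).} For $v_{j}\in M(G)$ we must still exclude $c(v_{j-1})=c(v_{j+1})$. First I note $v_{j+1}\notin M(G)$, so the previous paragraph gives $c(v_{j})\neq c(v_{j+2})$. Likewise $c(v_{j})\neq c(v_{j-2})$, and $c(v_{j-1}),c(v_{j+1})\neq c(v_{j})$. Now write $c(v_{j})=1$ and assume $c(v_{j-1})=c(v_{j+1})=2$. Then $c(v_{j+2})\neq 1$ (shown above) and $c(v_{j+2})\neq 2$ by $(C1)$, so $c(v_{j+2})=3$; similarly $c(v_{j-2})=3$. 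But $v_{j+2}$ and $v_{j-2}$ are adjacent on $C_{5}$, which contradicts $(C1)$. Hence $c(v_{j-1})\neq c(v_{j+1})$ for every $v_{j}\in M(G)$ as well.

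\textbf{Step 3 (conclusion).} Steps 1 and 2 show that $c|_{V(C_{5})}$ is proper and gives every cycle vertex two distinct neighbour colors. So it is a $(3,2)$-coloring of $C_{5}$, contradicting $\chi_{2}(C_{5})=5$ from Theorem \ref{theorem1.2'}. One can also see this directly: $C_{5}^{2}=K_{5}$ needs $5$ colors. Therefore $\chi_{2}(G)\ge 4$, and by (\ref{eq:2'}) this also gives $\chi_{L,2}(G)\ge 4=k+1$. The only delicate point is Step 2, where stability of $M(G)$ is used twice: once to force the colors at distance two, and once (through the $5$-cycle's odd structure) to produce the adjacent pair $v_{j\pm2}$.
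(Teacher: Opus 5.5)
Your proof is correct, but it is organized differently from the paper's. The paper splits into $|M(G)|=1$ (without loss of generality $M(G)=\{v_1\}$) and $|M(G)|=2$ (without loss of generality $M(G)=\{v_1,v_4\}$). In each case it uses the degree-$2$ cycle vertices to force three mutually distinct colors on three cycle vertices ($v_2,v_3,v_4$, resp.\ $v_1,v_2,v_3$). A vertex of $M(G)$ ($v_1$, resp.\ $v_4$) must avoid all three, so it has no color left.

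You avoid the case split. Your Step 2 is a uniform local argument: even at $v_j\in M(G)$ the two cycle neighbours must get distinct colors, since otherwise $c(v_{j+2})$ and $c(v_{j-2})$ would both be forced to the third color, yet $v_{j+2}v_{j-2}\in E(C_5)$. Hence $c|_{V(C_5)}$ is a $(3,2)$-coloring of $C_5$, which contradicts $\chi_2(C_5)=5$, i.e.\ $C_5^2=K_5$.

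What each buys: your version shows that the real obstruction is $C_5^2=K_5$, and it never needs $|M(G)|\le 2$ or a placement of $M(G)$ up to symmetry, at the small cost of appealing to the cycle result. The paper's version is fully self-contained and names the specific vertex that runs out of colors.

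One small correction to your closing remark: the hypothesis on $M(G)$ is used only once in Step 2, to get $c(v_{j\pm2})\neq c(v_j)$ from $d_G(v_{j\pm1})=2$. The adjacency of $v_{j+2}$ and $v_{j-2}$ is just a property of the $5$-cycle, not of the stability of $M(G)$.
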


\par By contradiction, we assume $\chi_{2}(G)\leq 3$. Therefore, there exists a $(3, 2)$-coloring $c$ of graph $G$. Since $n=5$, for any $v_{i}\in M(G)$ and $d_{G}(v_{i-1})=d_{G}(v_{i+1})=2$, there must be $|M(G)|\leq \alpha(C_{5})=2$.

\par When $|M(G)|=1$, without loss of generality assume that $v_{1}\in M(G)$. Since $r=2$, we have $c(v_{2})$, $c(v_{3})$ and $c(v_{4})$ are mutually distinct. As $c(v_{1})\notin \{c(v_{2}), c(v_{3}), c(v_{4}), c(v_{5})\}$, there is no available color for vertex $v_{1}$. This contradicts the fact that $c$ is a $(3, 2)$-coloring. Thus $\chi_{2}(G)\geq 4$.

\par When $|M(G)|=2$, without loss of generality assume that $M(G)=\{v_{1}, v_{4}\}$. Since $r=2$, we have $c(v_{1})$, $c(v_{2})$ and $c(v_{3})$ are mutually distinct. As $c(v_{4})\notin \{c(v_{1}), c(v_{2}), c(v_{3}), c(v_{5})\}$, there is no available color for vertex $v_{4}$. This contradicts the fact that $c$ is a $(3, 2)$-coloring. Thus $\chi_{2}(G)\geq 4$, and so Claim \ref{claim3.5'} is proved.

\par By Claim \ref{claim3.5'} and \text{(}\ref{eq:2'}\text{)}, $4\leq\chi_{2}(G)\leq\chi_{L,2}(G)$. Thus in Case $1.1$, $\chi_{L,2}(G)=k+1=4$.

\noindent\textbf{Case 1.2:} There exists a vertex $v_{i}\in M(G)$ such that $d_{G}(v_{i-1})\neq2$ or $d_{G}(v_{i+1})\neq2$, that is $v_{i-1}\in M(G)$ or $v_{i+1}\in M(G)$.

\par There must be $|M(G)|\geq2$ and there exist two adjacent vertices in $M(G)$. Without loss of generality assume that $\{v_{1}, v_{5}\}\subseteq M(G)$. For any $3$-list $L$ of $G$, we need to consider whether the lists of $L(v_{1})$, $L(v_{4})$ and $L(v_{5})$ are the same or not.

\noindent\textbf{Case 1.2.1:} $L(v_{1})\neq L(v_{5})$ or $L(v_{5})\neq L(v_{4})$.

\par For any $3$-list $L$ of $G$, we obtain the coloring $c$ of $G$ as follows: If $L(v_{1})\neq L(v_{5})$, then $L(v_{1})-L(v_{5})\neq\emptyset$. Hence there exists $c_{0}(v_{1})\in L(v_{1})-L(v_{5})$. Since $|L(v_{i})|=3$, the other vertices of $C_{5}$ can be colored by setting the following:

\[
\left\{
\begin{array}{ll}
c_{0}(v_{2})\in L(v_{2})-\{c_{0}(v_{1})\};\\
c_{0}(v_{3})\in L(v_{3})-\{c_{0}(v_{1}), c_{0}(v_{2})\};\\
c_{0}(v_{4})\in L(v_{4})-\{c_{0}(v_{2}), c_{0}(v_{3})\};\\
c_{0}(v_{5})\in L(v_{5})-\{c_{0}(v_{1}), c_{0}(v_{3}), c_{0}(v_{4})\}.
\end{array}
\right.
\]

\par Since $c_{0}(v_{1}) \notin L(v_{5})$, we have $|L(v_{5})-\{c_{0}(v_{1}), c_{0}(v_{3}), c_{0}(v_{4})\}|\geq1$. Hence it is possible to define $c_{0}(v_{5})\in L(v_{5})-\{c_{0}(v_{1}), c_{0}(v_{3}), c_{0}(v_{4})\}$.

\par For any $3$-list $L$ of $G$, when the above operations are finished, we obtain an $(L, 1)$-coloring $c_{0}$ of $C_{5}$ such that $1\leq |c_{0}(N_{C_{5}}(v_{1}))|\leq2$, $1\leq |c_{0}(N_{C_{5}}(v_{5}))|\leq2$ and $|c_{0}(N_{C_{5}}(v_{i}))|=2$ $(i\not\in\{1,5\})$.

\par If $L(v_{4})\neq L(v_{5})$, then $L(v_{4})-L(v_{5})\neq\emptyset$. Hence there exists $c_{0}(v_{4})\in L(v_{4})-L(v_{5})$. Since $|L(v_{i})|=3$, the other vertices of $C_{5}$ can be colored by choosing the following:

\[
\left\{
\begin{array}{ll}
c_{0}(v_{3})\in L(v_{3})-\{c_{0}(v_{4})\};\\
c_{0}(v_{2})\in L(v_{2})-\{c_{0}(v_{3}), c_{0}(v_{4})\};\\
c_{0}(v_{1})\in L(v_{1})-\{c_{0}(v_{2}), c_{0}(v_{3})\};\\
c_{0}(v_{5})\in L(v_{5})-\{c_{0}(v_{1}), c_{0}(v_{3}), c_{0}(v_{4})\}.
\end{array}
\right.
\]

\par Since $c_{0}(v_{4}) \notin L(v_{5})$, we have $|L(v_{5})-\{c_{0}(v_{1}), c_{0}(v_{3}), c_{0}(v_{4})\}|\geq1$. Hence it is possible to define $c_{0}(v_{5})\in L(v_{5})-\{c_{0}(v_{1}), c_{0}(v_{3}), c_{0}(v_{4})\}$.

\par For any $3$-list $L$ of $G$, when the above operations are finished, we obtain an $(L, 1)$-coloring $c_{0}$ of $C_{5}$ such that $1\leq |c_{0}(N_{C_{5}}(v_{5}))|\leq2$, $1\leq |c_{0}(N_{C_{5}}(v_{1}))|\leq2$ and $|c_{0}(N_{C_{5}}(v_{i}))|=2$ $(i\not\in\{1,5\})$.

\par For any $v_{i}\in M(G)$, by Lemma \ref{Lemma3.1'}, there exists an $(L,2)$-coloring $c_{i}$ of $T_{v_{i}}$ such that $c_{i}(v_{i})=c_{0}(v_{i})$, $c_{i}(v_{i-1})=c_{0}(v_{i-1})$ and $c_{i}(v_{i+1})=c_{0}(v_{i+1})$.
Apply \text{(}\ref{eq:8'}\text{)} to form a coloring $c$ of $G$. By Lemma \ref{3.1-3.2} with $|L|=3$, we conclude that in Case $1.2.1$, $\chi_{L,r}(G)=\chi_{L,2}(G)\leq 3$.

\noindent\textbf{Case 1.2.2:} $L(v_{1})=L(v_{4})=L(v_{5})$.

\par For any $3$-list $L$ of $G$, we obtain the coloring $c$ of $G$ as follows: As $|L(v_{i})|=3$, there exists $c_{0}(v_{1})\in L(v_{1})$. Since $L(v_{1})=L(v_{4})=L(v_{5})$, the other vertices of $C_{5}$ can be colored by making these selections:

\[
\left\{
\begin{array}{ll}
c_{0}(v_{2})\in L(v_{2})-\{c_{0}(v_{1})\};\\
c_{0}(v_{3})\in L(v_{3})-\{c_{0}(v_{1}), c_{0}(v_{2})\};\\
c_{0}(v_{4})=c_{0}(v_{1});\\
c_{0}(v_{5})\in L(v_{5})-\{c_{0}(v_{1}), c_{0}(v_{3}), c_{0}(v_{4})\}.
\end{array}
\right.
\]

\par Since $c_{0}(v_{4})=c_{0}(v_{1})$, we have $|L(v_{5})-\{c_{0}(v_{1}), c_{0}(v_{3}), c_{0}(v_{4})\}|\geq1$. Hence it is possible to define $c_{0}(v_{5})\in L(v_{5})-\{c_{0}(v_{1}), c_{0}(v_{3}), c_{0}(v_{4})\}$.

\par For any $3$-list $L$ of $G$, when the above operations are finished, we obtain an $(L, 1)$-coloring $c_{0}$ of $C_{5}$ such that $|c_{0}(N_{C_{5}}(v_{5}))|=1$, $1\leq|c_{0}(N_{C_{5}}(v_{1}))|\leq2$ and $|c_{0}(N_{C_{5}}(v_{i}))|=2$ $(i\not\in\{1, 5\})$. For any $v_{i}\in M(G)$, by Lemma \ref{Lemma3.1'}, there exists an $(L,2)$-coloring $c_{i}$ of $T_{v_{i}}$ such that $c_{i}(v_{i})=c_{0}(v_{i})$, $c_{i}(v_{i-1})=c_{0}(v_{i-1})$ and $c_{i}(v_{i+1})=c_{0}(v_{i+1})$.
As $c_{0}$ and $c_{i}$ agree on these vertices for each $i$, we apply \text{(}\ref{eq:8'}\text{)} to obtain an $(L, 2)$-coloring $c$ of $G$. By Lemma \ref{3.1-3.2} with $|L|=3$, we have $\chi_{L,r}(G)=\chi_{L,2}(G)\leq 3$.

\par By \text{(}\ref{eq:2'}\text{)}, $\chi_{L,2}(G)\geq 3$. Thus in Case $1.2.2$, $\chi_{L,r}(G)=\chi_{L,2}(G)=3\neq4$.

\noindent\textbf{Case 2:} $r\geq3$.

\par As $r\geq3$ and $\Delta(G)\geq3$, $k+1=\min\{{r, \Delta(G)}\}+1+1\geq5$. By Lemma \ref{claim2'}, then $\chi_{L,r}(G)\leq k+1$.

\par By \text{(}\ref{eq:2'}\text{)}, $\chi_{L,r}(G)\geq k$. Thus $\chi_{L,r}(G)\in \{k, k+1\}$. We now prove the following necessary and sufficient condition for $\chi_{L,r}(G)=k+1$.

\begin{claim}\label{claim3'}
If $\Delta(G)=3$ or if $r=3$ and $d_{G}(v_{i})\leq3$ for every vertex $v_{i}\in V(C_{5})$, then $\chi_{L,r}(G)=k+1$.
\end{claim}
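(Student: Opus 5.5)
The plan is to get the upper bound from what is already proved and to obtain the lower bound from the non-list parameter: exhibit no list at all, and instead show $\chi_{r}(G)\geq 5$ using the cycle result Theorem \ref{theorem1.2'}.

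First, in both hypotheses we have $k=4$. Under $(ii$-$1)$, $\Delta(G)=3\leq r$, so $k=\min\{r,3\}+1=4$. Under $(ii$-$2)$, $r=3\leq\Delta(G)$, so again $k=4$. By Case 2 above (which uses Lemma \ref{claim2'}), $\chi_{L,r}(G)\leq k+1=5$. By (\ref{eq:2'}), $\chi_{L,r}(G)\geq\chi_{r}(G)$. So it suffices to prove $\chi_{r}(G)\geq 5$, i.e. that $G$ has no $(4,r)$-coloring.

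Suppose for contradiction that $c$ is a $(4,r)$-coloring of $G$. The key observation is that in either case every cycle vertex $v_{i}$ satisfies $d_{G}(v_{i})\leq 3\leq r$. Under $(ii$-$1)$ this follows from $\Delta(G)=3$ and $r\geq3$; under $(ii$-$2)$ it is the hypothesis together with $r=3$. Hence condition $(C2)$ at $v_{i}$ reads $|c(N_{G}(v_{i}))|=d_{G}(v_{i})$, so all neighbors of $v_{i}$ receive pairwise distinct colors. In particular $c(v_{i-1})\neq c(v_{i+1})$ for every $i$ (indices mod $5$), and $c$ is proper on $C_{5}$. Therefore the restriction $c|_{V(C_{5})}$ is a proper $4$-coloring of $C_{5}$ with $|c(N_{C_{5}}(v_{i}))|=2=\min\{d_{C_{5}}(v_{i}),2\}$ for every $i$. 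This makes it a $(4,2)$-coloring of $C_{5}$, contradicting $\chi_{2}(C_{5})=5$ from Theorem \ref{theorem1.2'}. Should a self-contained check be preferred, this can be verified directly: the condition says $v_{1},v_{3},v_{5},v_{2},v_{4}$, in this order around the "square" cycle $C_{5}^{2}=K_{5}$, must get pairwise distinct colors. Every pair of vertices of $C_{5}$ is at distance at most $2$, so all five colors must differ.

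Combining the two bounds gives $\chi_{L,r}(G)=5=k+1$. The only delicate step is the reduction showing that the $r$-hued condition at cycle vertices forces distinct colors on the two cycle neighbors. This needs $d_{G}(v_{i})\leq r$ on the cycle, which is exactly what both hypotheses guarantee. It also explains why the claim fails to apply when some cycle vertex has degree exceeding $r$, since then $v_{i-1}$ and $v_{i+1}$ may share a color.
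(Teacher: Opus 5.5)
Your argument is correct and is essentially the paper's. The upper bound $\chi_{L,r}(G)\leq 5$ comes from Lemma \ref{claim2'}, and the lower bound is $\chi_{L,r}(G)\geq\chi_{r}(G)\geq 5$: since $d_{G}(v_{i})\leq 3\leq r$ on the cycle, any two vertices of $C_{5}$ at distance at most $2$ (that is, all of them) must get distinct colors. The paper treats the two hypotheses separately, using $\chi_{3}(G)=\chi(G^{2})\geq\chi(C_{5}^{2})=5$ when $\Delta(G)=3$ and a direct count when $r=3$, whereas you cover both at once by noting that every cycle vertex has degree at most $r$.
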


\par If $\Delta(G)=3$ or $r=3$, then $k+1=\min\{{r, \Delta(G)}\}+1+1=5$. By Lemma \ref{claim2'}, $\chi_{L,r}(G)\leq k+1$. Next, we only need to prove that $\chi_{L, r}(G)\geq k+1$.

\par When $\Delta(G)=3$, for every $v_{i}\in M(G)$, $d_{G}(v_{i})\geq3$ and $d_{G}(v_{i})\leq\Delta(G)=3$, it follows that $d_{G}(v_{i})=3$. As $r\geq3$ and $C_{5}\subseteq G$, we obtain $\chi_{L,r}(G)\geq\chi_{r}(G)\geq\chi_{3}(G)=\chi_{\Delta}(G)=\chi(G^{2})\geq\chi(C_{5}^{2})=\chi(K_{5})=5$. Thus $\chi_{L,r}(G)\geq k+1$.

\par When $r=3$ and $d_{G}(v_{i})\leq3$ for every vertex $v_{i}\in V(C_{5})$, there must be $\chi_{3}(G)\geq5$. Otherwise, we assume $\chi_{3}(G)\leq 4$. Therefore, there exists a $(4, 3)$-coloring $c$ of graph $G$. Since $r=3$ and $d_{G}(v_{i})\leq3$ for any $v_{i}\in V(C_{5})$, $c(v_{1})$, $c(v_{2})$, $c(v_{3})$ and $c(v_{4})$ are mutually distinct. Since $c(v_{5})\notin \{c(v_{1}), c(v_{2}), c(v_{3}), c(v_{4})\}$, there is no available color for vertex $v_{5}$. This contradicts the fact that $c$ is a $(4, 3)$-coloring. Thus $\chi_{3}(G)\geq 5$. By \text{(}\ref{eq:2'}\text{)}, $\chi_{L,3}(G)\geq\chi_{3}(G)\geq5$. Thus $\chi_{L,3}(G)\geq k+1$, and so Claim \ref{claim3'} is proved.

\par Claim \ref{claim3'} implies the sufficiency of Theorem \ref{Th3.5'} $(ii)$.

\begin{claim}\label{claim4'}
If $\min\{\Delta(G), r\}>3$, or if $\Delta(G)>3$ and for some vertex $v_{i}\in V(C_{5})$, $d_{G}(v_{i})\geq4$, then $\chi_{L,r}(G)=k$.
\end{claim}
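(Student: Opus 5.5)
We first note that the lower bound $\chi_{L,r}(G)\geq k$ is immediate from (\ref{eq:2'}). So the whole task is the upper bound $\chi_{L,r}(G)\leq k$. Throughout we are in Case $2$, so $r\geq3$. We split according to the value of $k=\min\{r,\Delta(G)\}+1$.

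\textbf{The case $k\geq5$.} If $\min\{\Delta(G),r\}>3$, or if $\Delta(G)>3$ and $r\geq4$, then $k\geq5$. Lemma \ref{claim2'} with $t=k=\max\{5,k\}$ then gives $\chi_{L,r}(G)\leq k$ directly. The only remaining situation is $r=3$, $\Delta(G)\geq4$, and some $v_i\in V(C_5)$ with $d_G(v_i)\geq4$. Here $k=4$, and we must handle arbitrary $4$-lists by hand. Without loss of generality $d_G(v_1)\geq4>r$.

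\textbf{Reduction for $r=3$, $k=4$.} We will find an $L$-coloring $c_0$ of $C_5$ in which every pair of vertices at distance at most $2$ on $C_5$ gets distinct colors, except that $c_0(v_2)=c_0(v_5)$ is allowed. That is, $c_0$ is an $L$-coloring of $C_5^2-v_2v_5=K_5-v_2v_5$. Given such $c_0$, we extend it to each $T_{v_i}$, $v_i\in M(G)$:
\begin{itemize}
\item For $i\neq1$, the colors on $v_{i-1},v_i,v_{i+1}$ are mutually distinct, so Lemma \ref{Lemma3.1'}$(i)$ applies.
\item For $i=1$, we use $(i)$ if $c_0(v_2)\neq c_0(v_5)$. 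Otherwise we use $(ii)$, which applies because $d_{T_{v_1}}(v_1)=d_G(v_1)>r$.
\end{itemize}
We then glue via (\ref{eq:8'}) and check $(C2)$ as in Lemma \ref{3.1-3.2}. A degree-$2$ cycle vertex sees two distinct colors. A cycle vertex of degree at least $3$ gets its condition from the $(L,r)$-coloring $c_i$ of $T_{v_i}$. The identification $c_0(v_2)=c_0(v_5)$ is harmless at $v_1$ exactly because $\min\{d_G(v_1),r\}=3<d_G(v_1)$, which is the content of $(ii)$. Every tree vertex is handled by $c_i$.

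\textbf{Coloring $K_5-v_2v_5$ from $4$-lists.} This is the main step.
\begin{itemize}
\item If there is $a\in L(v_2)\cap L(v_5)$, set $c_0(v_2)=c_0(v_5)=a$. Each of $v_1,v_3,v_4$ then keeps at least $3$ available colors, and the triangle $v_1v_3v_4$ is colored greedily.
\item If $L(v_2)\cap L(v_5)=\emptyset$ and some $b\in L(v_2)\setminus L(v_1)$ exists, set $c_0(v_2)=b$ and choose $c_0(v_5)\in L(v_5)$ arbitrarily (it is automatically $\neq b$). Now $v_1$ keeps at least $3$ colors, while $v_3,v_4$ keep at least $2$ each. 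Color $v_3$, then $v_4$ avoiding $c_0(v_3)$, then $v_1$ last with at least $3-2\geq1$ choice.
\item If instead $L(v_2)=L(v_1)$, then $L(v_5)\cap L(v_1)=\emptyset$. Choose $c_0(v_5)$ arbitrarily, then $c_0(v_2)$ arbitrarily. Now $v_1$ keeps at least $3$ colors, and the same greedy order finishes.
\end{itemize}
In every case we obtain the required $c_0$, hence an $(L,3)$-coloring of $G$ for every $4$-list $L$, so $\chi_{L,r}(G)\leq k$. I expect this case analysis on the list intersections, together with verifying that Lemma \ref{Lemma3.1'}$(ii)$ legitimately covers the coincidence at $v_1$, to be the main obstacle.
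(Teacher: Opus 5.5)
Your proposal is correct and is essentially the paper's proof: dispose of $k\geq 5$ by Lemma~\ref{claim2'}; for $r=3$, $d_G(v_1)\geq 4$, take an $L$-coloring of $C_5$ in which only $v_2,v_5$ may share a color, extend it with Lemma~\ref{Lemma3.1'}$(i)$/$(ii)$, and glue. The only difference is that your case analysis on $L(v_2)\cap L(v_5)$ is unnecessary, since the paper gets the same $L$-coloring of $K_5-v_2v_5$ greedily in the order $v_1,\dots,v_5$ via (\ref{eqa}), each vertex having at most three earlier neighbors.
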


\par Assume first that $\min\{\Delta(G), r\}>3$. Then $k\geq5$. Therefore, by \text{(}\ref{eq:2'}\text{)} and Lemma \ref{claim2'}, we have $\chi_{L,r}(G)=k$. Hence we assume that $\Delta(G)>3$ and for some vertex $v_{i}\in V(C_{5})$, $d_{G}(v_{i})\geq4$.
We only need to consider the case $d_{G}(v_{i})\geq4$ and $r=3$. Therefore, we have $d_{G}(v_{i})>r$ and $k=\min\{{r, \Delta(G)}\}+1=4$. Without loss of generality, we assume that $d_{G}(v_{1})\geq4>3=r$.

\par For any $k$-list $L$ of $G$, by (\ref{eqa}), we obtain an $(L, 1)$-coloring $c_{0}$ of $C_{5}$ such that $1\leq|c_{0}(N_{C_{5}}(v_{1}))|\leq2$ and $|c_{0}(N_{C_{5}}(v_{i}))|=2$ $(i\neq1)$ for every vertex $v_{i}\in M(G)$. For $v_{1}\in M(G)$, by Lemma \ref{Lemma3.1'}, there exists an $(L,r)$-coloring $c_{1}$ of $T_{v_{1}}$ such that $c_{1}(v_{1})=c_{0}(v_{1})$, $c_{1}(v_{5})=c_{0}(v_{5})$ and $c_{1}(v_{2})=c_{0}(v_{2})$. Similarly, for every other vertex $v_{i}$ in $M(G)$, by $(i)$ of Lemma \ref{Lemma3.1'}, there exists an $(L,r)$-coloring $c_{i}$ of $T_{v_{i}}$ such that $c_{i}(v_{i})=c_{0}(v_{i})$, $c_{i}(v_{i-1})=c_{0}(v_{i-1})$ and $c_{i}(v_{i+1})=c_{0}(v_{i+1})$.

\par As for each $i$, $c_{0}$ and $c_{i}$ agree on $\{v_{i-1}, v_{i}, v_{i+1}\}$, we apply \text{(}\ref{eq:8'}\text{)} to obtain an $(L, r)$-coloring $c$ of $G$. By Lemma \ref{3.1-3.2} with $|L|=k$, we have $\chi_{L,r}(G)\leq k$. By \text{(}\ref{eq:2'}\text{)}, $\chi_{L,r}(G)\geq k$. Thus $\chi_{L,r}(G)=k$, and so Claim \ref{claim4'} is proved.

\par Claim \ref{claim4'} implies the necessity of Theorem \ref{Th3.5'} $(ii)$.
$\hfill\square$

\begin{theorem}\label{Theorem3.4}
If $G\in \mathcal{G}_{n}$, $n\neq5$, then
$$
\chi_{L,2}(G)\in
\begin{cases}
\{3\} & n\equiv 0\pmod3; \\
\{3, 4\} & n\not\equiv 0\pmod3. \\
\end{cases}
$$
\end{theorem}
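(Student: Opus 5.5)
Since $G\in\mathcal{G}_n$ forces $\Delta(G)\geq3$, we have $k=\min\{2,\Delta(G)\}+1=3$, and (\ref{eq:2'}) gives the lower bound $\chi_{L,2}(G)\geq 3$ in both cases. So the plan is to prove two upper bounds: $\chi_{L,2}(G)\leq 3$ when $n\equiv0\pmod 3$, and $\chi_{L,2}(G)\leq 4$ when $n\not\equiv 0\pmod3$ (with $n\neq5$). In both cases we reduce to Lemma \ref{3.1-3.2} with $r=2$. It therefore suffices, for each $l$-list $L$ with $l\in\{3,4\}$ as appropriate, to produce an $(L,1)$-coloring $c_0$ of $C_n$ such that $|c_0(N_{C_n}(v_i))|=2$ for every $v_i\notin M(G)$. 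The extension to the trees $T_{v_i}$ is then handled by Lemma \ref{Lemma3.1'}: part $(i)$ applies when $c_0(v_{i-1})\neq c_0(v_{i+1})$, and part $(ii)$ applies when the two colors coincide. Part $(ii)$ is legitimate because $d_{T_{v_i}}(v_i)=d_G(v_i)\geq3>2=r$.

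\textbf{The case $n\equiv 0\pmod 3$.} Here Theorem \ref{theorem1.2'} gives $\chi_{L,2}(C_n)=3$. So for every $3$-list $L$ of $G$, the restriction of $L$ to $C_n$ admits an $(L,2)$-coloring $c_0$ of $C_n$. Every vertex of $C_n$ has degree $2$ in $C_n$, so $|c_0(N_{C_n}(v))|=2$ for all $v\in V(C_n)$, and in particular for $v\notin M(G)$. Lemma \ref{3.1-3.2} with $l=3=k$ then yields $\chi_{L,2}(G)\leq3$, which settles this case.

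\textbf{The case $n\not\equiv0\pmod3$, $n\neq5$.} Theorem \ref{theorem1.2'} now gives $\chi_{L,2}(C_n)=4$. For any $4$-list $L$ we again obtain an $(L,2)$-coloring $c_0$ of $C_n$ in which every cycle vertex sees two distinct colors. Lemma \ref{3.1-3.2} with $l=4\geq k$ gives $\chi_{L,2}(G)\leq4$, hence $\chi_{L,2}(G)\in\{3,4\}$.

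The main point to verify is that the $(L,r)$-coloring $c_i$ of $T_{v_i}$ supplied by Lemma \ref{Lemma3.1'} really has list size at least $k(2,T_{v_i})$; this needs $l\geq k$, which holds. We must also check that $(C2)$ at $v_i\in M(G)$ is governed by $T_{v_i}$ alone. This is true because $N_G(v_i)=N_{T_{v_i}}(v_i)$. Note that when $n\equiv 0 \pmod 3$ we need not exhibit a graph attaining the value $4$; we only record that the value $4$ is not excluded.
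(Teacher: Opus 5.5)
Your proof is correct and follows the paper's argument essentially step for step. It takes the lower bound from (\ref{eq:2'}), an $(L,2)$-coloring $c_0$ of $C_n$ from Theorem \ref{theorem1.2'} with $3$-lists (respectively $4$-lists), the extension to each $T_{v_i}$ via Lemma \ref{Lemma3.1'}, and the assembly via Lemma \ref{3.1-3.2}. Two small remarks: part $(ii)$ of Lemma \ref{Lemma3.1'} is never needed, because $c_0$ is $2$-hued on $C_n$ and so $c_0(v_{i-1})\neq c_0(v_{i+1})$ always holds (which is why the paper uses only part $(i)$); and your closing remark should read $n\not\equiv 0\pmod 3$, since for $n\equiv 0\pmod 3$ the value $4$ is in fact excluded.
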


\noindent\textbf{Proof}. We consider the following cases.

\noindent\textbf{Case 1:} $n\equiv 0\pmod3$.

\par By \text{(}\ref{eq:2'}\text{)}, it follows that $\chi_{L, 2}(G)\geq 3$. Next, we only need to prove that $\chi_{L, 2}(G)\leq 3$.

\par Since $\Delta(G)\geq3$ and $r=2$, we have $k=\min\{{r, \Delta(G)}\}+1=3$. By Theorem \ref{theorem1.2'}, $\chi_{L,r}(C_{n})=3$. For any $3$-list $L$ of $G$, there exists an $(L, 2)$-coloring $c_{0}$ of $C_{n}$ such that $|c_{0}(N_{C_{n}}(v_{i}))|=2$ for every vertex $v_{i}\in M(G)$. For any $v_{i}\in M(G)$, by $(i)$ of Lemma \ref{Lemma3.1'}, there exists an $(L,2)$-coloring $c_{i}$ of $T_{v_{i}}$ such that $c_{i}(v_{i})=c_{0}(v_{i})$, $c_{i}(v_{i-1})=c_{0}(v_{i-1})$ and $c_{i}(v_{i+1})=c_{0}(v_{i+1})$.
As $c_{0}$ and $c_{i}$ agree on these vertices for each $i$, we apply \text{(}\ref{eq:8'}\text{)} to obtain an $(L, 2)$-coloring $c$ of $G$. By Lemma \ref{3.1-3.2} with $|L|=3$, we have $\chi_{L,2}(G)\leq 3$. Therefore, we have $\chi_{L,2}(G)=3$.

\noindent\textbf{Case 2:} $n\not\equiv 0\pmod3$.

\par By \text{(}\ref{eq:2'}\text{)}, it follows that $\chi_{L, 2}(G)\geq 3$. Since $\Delta(G)\geq3$ and $r=2$, we have $k+1=\min\{{r, \Delta(G)}\}+1+1=4$. Next, we only need to prove that $\chi_{L, 2}(G)\leq 4$.

\par By Theorem \ref{theorem1.2'}, we have $\chi_{L,2}(C_{n})=4=k+1$. For any $4$-list $L$ of $G$, there exists an $(L, 2)$-coloring $c_{0}$ of $C_{n}$ such that $|c_{0}(N_{C_{n}}(v_{i}))|=2$ for every vertex $v_{i}\in M(G)$. Therefore, for any $v_{i}\in M(G)$, by $(i)$ of Lemma \ref{Lemma3.1'}, there exists an $(L,2)$-coloring $c_{i}$ of $T_{v_{i}}$ such that $c_{i}(v_{i})=c_{0}(v_{i})$, $c_{i}(v_{i-1})=c_{0}(v_{i-1})$ and $c_{i}(v_{i+1})=c_{0}(v_{i+1})$.
As for each $i$, $c_{0}$ and $c_{i}$ agree on $\{v_{i-1}, v_{i}, v_{i+1}\}$, we apply \text{(}\ref{eq:8'}\text{)} to obtain an $(L, 2)$-coloring $c$ of $G$. By Lemma \ref{3.1-3.2} with $|L|=4$, we have $\chi_{L,2}(G)\leq 4$. Therefore, we have $\chi_{L,2}(G)\in \{3, 4\}$.

\par This completes the proof of Theorem \ref{Theorem3.4}.
$\hfill\square$

\par We have the following observations (\ref{at-1}) and (\ref{at-2}).
\begin{equation}\label{at-1}
\mbox{If $G\in \mathcal{G}_{n}$, $n\neq5$, $n\not\equiv0$ (mod 3) and $|M(G)|=1$, then $\chi_{L,2}(G)=4$.}
\end{equation}

\par Since $r=2$ and $n\not\equiv 0\pmod3$, we must have $\chi_{2}(G)\geq4$. Otherwise, we assume $\chi_{2}(G)\leq 3$. Therefore, there exists a $(3, 2)$-coloring $c$ : $V(G)\rightarrow \{1, 2, 3\}$. Without loss of generality assume that $v_{1}\in M(G)$, $c(v_{1})=1$ and $c(v_{2})=2$. Since $r=2$, for any $v_{i}\in V(C_{n})~(3\leq i\leq n-2)$, if $i\equiv 1\pmod3$, then $c(v_{i})=1$; if $i\equiv 2\pmod3$, then $c(v_{i})=2$; if $i\equiv 0\pmod3$, then $c(v_{i})=3$.

\par If $n\equiv 1\pmod3$, then $c(v_{n-1})=3$. Therefore, $c(v_{1})$, $c(v_{n-1})$ and $c(v_{n-2})$ are mutually distinct. Since $c(v_{n})\notin \{c(v_{1}), c(v_{n-1}), c(v_{n-2})\}$, there is no available color for vertex $v_{n}$; when $n\equiv 2\pmod3$, we have $c(v_{1})$, $c(v_{n-2})$ and $c(v_{n-3})$ are mutually distinct. Since $c(v_{n-1})\notin \{c(v_{1}), c(v_{n-2}), c(v_{n-3})\}$, there is no available color for vertex $v_{n-1}$. This contradicts the fact that $c$ is a $(3, 2)$-coloring. Thus $\chi_{2}(G)\geq 4$. By \text{(}\ref{eq:2'}\text{)} and Theorem \ref{Theorem3.4}, $4\leq\chi_{2}(G)\leq\chi_{L,2}(G)\leq4$. Thus $\chi_{L,2}(G)=4$. This proves (\ref{at-1}).

\begin{equation}\label{at-2}
\mbox{If $G\in \mathcal{G}_{n}$, $n\neq5$, $n\not\equiv0$ (mod 3) and $|M(G)|\geq n-1$, then $\chi_{L,2}(G)=3$.}
\end{equation}

\par Since $|M(G)|\geq n-1$, without loss of generality assume that $\{v_{1}, v_{2},\cdots,v_{n-1}\}\subseteq M(G)$. For any $3$-list $L$ of $G$, we obtain the coloring $c$ of $G$ as follows: As $|L(v_{i})|=3$ and $|M(G)|\geq n-1$, there exists $c_{0}(v_{1})\in L(v_{1})$ and $c_{0}(v_{i})\in L(v_{i})-\{c_{0}(v_{i-1})\}$ for $2\leq i \leq n-2$. The other vertices of $C_{n}$ can be colored by setting the following:

\[
\left\{
\begin{array}{ll}
c_{0}(v_{n-1})\in L(v_{n-1})-\{c_{0}(v_{1}), c_{0}(v_{n-2})\};\\
c_{0}(v_{n})\in L(v_{n})-\{c_{0}(v_{1}), c_{0}(v_{n-1})\}.
\end{array}
\right.
\]

\par Since $|L(v_{n-1})-\{c_{0}(v_{1}), c_{0}(v_{n-2})\}|\geq1$ and $|L(v_{n})-\{c_{0}(v_{1}), c_{0}(v_{n-1})\}|\geq1$, it is possible to define $c_{0}(v_{n-1})\in L(v_{n-1})-\{c_{0}(v_{1}), c_{0}(v_{n-2})\}$ and $c_{0}(v_{n})\in L(v_{n})-\{c_{0}(v_{1}), c_{0}(v_{n-1})\}$.

\par For any $3$-list $L$ of $G$, when the above operations are finished, we obtain an $(L, 1)$-coloring $c_{0}$ of $C_{n}$ such that $|c_{0}(N_{C_{n}}(v_{n}))|=2$ and $1\leq|c_{0}(N_{C_{n}}(v_{i}))|\leq2$ $(1\leq i\leq n-1)$ for every vertex $v_{i}\in M(G)$. Since $v_{i}\in M(G)$ $(i\neq n)$, $d_{G}(v_{i})\geq3>2=r$. For any $v_{i}\in M(G)$, by Lemma \ref{Lemma3.1'}, there exists an $(L,2)$-coloring $c_{i}$ of $T_{v_{i}}$ such that $c_{i}(v_{i})=c_{0}(v_{i})$, $c_{i}(v_{i-1})=c_{0}(v_{i-1})$ and $c_{i}(v_{i+1})=c_{0}(v_{i+1})$.

\par As $c_{0}$ and $c_{i}$ agree on these vertices for each $i$, we apply \text{(}\ref{eq:8'}\text{)} to obtain an $(L, 2)$-coloring $c$ of $G$. By Lemma \ref{3.1-3.2} with $|L|=3$, we have $\chi_{L,2}(G)\leq 3$. By \text{(}\ref{eq:2'}\text{)}, $\chi_{L,2}(G)\geq3$. Thus $\chi_{L,2}(G)=3$. This proves (\ref{at-2}).

\par If $G\in \mathcal{G}_{n}$ and $n\neq5$, then either $\chi_{L,2}(G)=3$ or $\chi_{L,2}(G)=4$ is possible. If $n\not\equiv0$ (mod 3), we present the values of $\chi_{L,2}(G)$ for $|M(G)|=1$ and $|M(G)|\geq n-1$ respectively. When $2\leq|M(G)|\leq n-2$, several examples are presented to show that both values are attainable (see Table \ref{tab:my_table}).

\begin{table}
\centering
\begin{tikzpicture}[x=1.00mm, y=1.00mm, inner xsep=0pt, inner ysep=0pt, outer xsep=0pt, outer ysep=0pt]
\path[line width=0mm] (90.78,18.18) rectangle +(127.63,56.41);
\definecolor{L}{rgb}{0,0,0}
\path[line width=0.30mm, draw=L] (92.78,72.48) -- (216.27,72.41);
\path[line width=0.30mm, draw=L] (92.78,60.12) -- (216.14,59.80);
\path[line width=0.30mm, draw=L] (92.78,40.13) -- (216.14,39.94);
\path[line width=0.30mm, draw=L] (93.07,20.62) -- (216.41,20.35);
\path[line width=0.30mm, draw=L] (125.13,72.48) -- (125.34,20.49);
\path[line width=0.30mm, draw=L] (163.88,72.25) -- (163.86,20.18);
\path[line width=0.30mm, draw=L] (216.32,72.48) -- (216.39,20.38);
\path[line width=0.30mm, draw=L] (92.89,72.59) -- (93.20,20.35);
\path[line width=0.30mm, draw=L] (92.78,72.48) -- (124.94,60.26);
\draw(117.12,66.47) node[anchor=base west]{\fontsize{14.23}{17.07}\selectfont n};
\draw(130,64.41) node[anchor=base west]{\fontsize{14.23}{17.07}\selectfont $n\equiv1$ (mod 3)};
\draw(168,64) node[anchor=base west]{\fontsize{14.23}{17.07}\selectfont $n\neq5$, $n\equiv2$ (mod 3)};
\draw(107.69,49.04) node[anchor=base west]{\fontsize{14.23}{17.07}\selectfont 3};
\draw(107,28.87) node[anchor=base west]{\fontsize{14.23}{17.07}\selectfont 4};
\draw(93.68,62.48) node[anchor=base west]{\fontsize{14.23}{17.07}\selectfont $\chi_{L,2}(G)$};
\definecolor{F}{rgb}{1,1,1}
\path[line width=0.30mm, draw=L, fill=F] (145.09,54.93) circle (0.00mm);
\path[line width=0.15mm, draw=L, fill=F] (145.03,54.99) circle (0.50mm);
\path[line width=0.15mm, draw=L, fill=F] (145.03,54.99) circle (0.50mm);
\path[line width=0.15mm, draw=L, fill=F] (140.04,49.95) circle (0.50mm);
\path[line width=0.15mm, draw=L, fill=F] (150.19,49.89) circle (0.50mm);
\path[line width=0.15mm, draw=L, fill=F] (145.15,45.03) circle (0.50mm);
\path[line width=0.15mm, draw=L, fill=F] (157.45,50.01) circle (0.50mm);
\path[line width=0.15mm, draw=L, fill=F] (132.54,49.95) circle (0.50mm);
\path[line width=0.15mm, draw=L] (145.39,54.93) -- (150.13,50.31);
\path[line width=0.15mm, draw=L] (147.49,53.25);
\path[line width=0.15mm, draw=L] (144.73,54.99) -- (140.29,50.31);
\path[line width=0.15mm, draw=L] (140.23,49.47) -- (144.85,45.15);
\path[line width=0.15mm, draw=L] (149.83,49.77) -- (145.51,45.27);
\path[line width=0.15mm, draw=L] (150.73,50.01) -- (157.03,49.95);
\path[line width=0.15mm, draw=L] (133.02,49.95) -- (139.57,49.95);
\path[line width=0.15mm, draw=L, fill=F] (145.03,54.99) circle (0.50mm);
\path[line width=0.15mm, draw=L, fill=F] (182.26,26.37) circle (0.50mm);
\path[line width=0.15mm, draw=L, fill=F] (145.03,54.99) circle (0.50mm);
\path[line width=0.15mm, draw=L, fill=F] (196.52,34.54) circle (0.50mm);
\path[line width=0.15mm, draw=L, fill=F] (197.32,30.42) circle (0.50mm);
\path[line width=0.15mm, draw=L, fill=F] (192.38,26.39) circle (0.50mm);
\path[line width=0.15mm, draw=L, fill=F] (187.20,26.38) circle (0.50mm);
\path[line width=0.30mm, draw=L, fill=F] (145.09,54.93) circle (0.00mm);
\path[line width=0.15mm, draw=L, fill=F] (145.03,54.99) circle (0.50mm);
\path[line width=0.15mm, draw=L, fill=F] (145.03,54.99) circle (0.50mm);
\path[line width=0.15mm, draw=L, fill=F] (140.04,49.95) circle (0.50mm);
\path[line width=0.15mm, draw=L, fill=F] (150.19,49.89) circle (0.50mm);
\path[line width=0.15mm, draw=L, fill=F] (145.15,45.03) circle (0.50mm);
\path[line width=0.15mm, draw=L, fill=F] (157.45,50.01) circle (0.50mm);
\path[line width=0.15mm, draw=L] (145.39,54.93) -- (150.13,50.31);
\path[line width=0.15mm, draw=L] (147.49,53.25);
\path[line width=0.15mm, draw=L] (144.73,54.99) -- (140.29,50.31);
\path[line width=0.15mm, draw=L] (140.23,49.47) -- (144.85,45.15);
\path[line width=0.15mm, draw=L] (149.83,49.77) -- (145.51,45.27);
\path[line width=0.15mm, draw=L] (150.73,50.01) -- (157.03,49.95);
\path[line width=0.15mm, draw=L, fill=F] (145.03,54.99) circle (0.50mm);
\path[line width=0.15mm, draw=L, fill=F] (145.03,54.99) circle (0.50mm);
\path[line width=0.30mm, draw=L, fill=F] (145.09,54.93) circle (0.00mm);
\path[line width=0.15mm, draw=L, fill=F] (145.03,54.99) circle (0.50mm);
\path[line width=0.15mm, draw=L, fill=F] (145.03,54.99) circle (0.50mm);
\path[line width=0.15mm, draw=L, fill=F] (140.04,49.95) circle (0.50mm);
\path[line width=0.15mm, draw=L, fill=F] (150.19,49.89) circle (0.50mm);
\path[line width=0.15mm, draw=L, fill=F] (145.15,45.03) circle (0.50mm);
\path[line width=0.15mm, draw=L, fill=F] (157.45,50.01) circle (0.50mm);
\path[line width=0.15mm, draw=L] (145.39,54.93) -- (150.13,50.31);
\path[line width=0.15mm, draw=L] (147.49,53.25);
\path[line width=0.15mm, draw=L] (144.73,54.99) -- (140.29,50.31);
\path[line width=0.15mm, draw=L] (140.23,49.47) -- (144.85,45.15);
\path[line width=0.15mm, draw=L] (149.83,49.77) -- (145.51,45.27);
\path[line width=0.15mm, draw=L] (150.73,50.01) -- (157.03,49.95);
\path[line width=0.15mm, draw=L, fill=F] (145.03,54.99) circle (0.50mm);
\path[line width=0.15mm, draw=L, fill=F] (145.03,54.99) circle (0.50mm);
\path[line width=0.15mm, draw=L] (182.78,26.35) -- (186.83,26.39);
\path[line width=0.15mm, draw=L] (187.84,26.33) -- (191.89,26.37);
\path[line width=0.15mm, draw=L] (193.00,26.42) -- (197.00,30.06);
\path[line width=0.15mm, draw=L] (192.91,34.48) -- (196.09,34.48);
\path[line width=0.15mm, draw=L, fill=F] (192.37,34.71) circle (0.50mm);
\path[line width=0.15mm, draw=L, fill=F] (187.24,34.65) circle (0.50mm);
\path[line width=0.15mm, draw=L, fill=F] (201.50,49.98) circle (0.50mm);
\path[line width=0.15mm, draw=L, fill=F] (182.18,34.48) circle (0.50mm);
\path[line width=0.15mm, draw=L] (182.74,34.54) -- (186.79,34.58);
\path[line width=0.15mm, draw=L] (187.85,34.60) -- (191.90,34.64);
\path[line width=0.15mm, draw=L] (192.78,34.52) -- (196.88,30.76);
\path[line width=0.15mm, draw=L] (197.89,49.96) -- (201.07,49.96);
\path[line width=0.15mm, draw=L, fill=F] (135.46,26.37) circle (0.50mm);
\path[line width=0.15mm, draw=L, fill=F] (154.59,30.44) circle (0.50mm);
\path[line width=0.15mm, draw=L, fill=F] (150.53,30.42) circle (0.50mm);
\path[line width=0.15mm, draw=L, fill=F] (145.59,26.39) circle (0.50mm);
\path[line width=0.15mm, draw=L, fill=F] (140.40,26.38) circle (0.50mm);
\path[line width=0.15mm, draw=L] (135.98,26.35) -- (140.03,26.39);
\path[line width=0.15mm, draw=L] (141.05,26.33) -- (145.10,26.37);
\path[line width=0.15mm, draw=L] (146.20,26.42) -- (150.31,30.02);
\path[line width=0.15mm, draw=L] (151.05,30.38) -- (154.22,30.38);
\path[line width=0.15mm, draw=L, fill=F] (145.58,34.71) circle (0.50mm);
\path[line width=0.15mm, draw=L, fill=F] (140.44,34.65) circle (0.50mm);
\path[line width=0.15mm, draw=L, fill=F] (135.39,34.48) circle (0.50mm);
\path[line width=0.15mm, draw=L] (135.94,34.54) -- (139.99,34.58);
\path[line width=0.15mm, draw=L] (141.05,34.60) -- (145.11,34.64);
\path[line width=0.15mm, draw=L] (145.98,34.52) -- (150.23,30.88);
\path[line width=0.15mm, draw=L, fill=F] (149.68,26.21) circle (0.50mm);
\path[line width=0.15mm, draw=L] (146.04,26.28) -- (149.22,26.28);
\path[line width=0.15mm, draw=L] (192.98,26.28) -- (196.16,26.28);
\path[line width=0.15mm, draw=L, fill=F] (196.73,26.27) circle (0.50mm);
\definecolor{F}{rgb}{0,0,0}
\path[line width=0.30mm, draw=L, fill=F] (135.22,32.15) circle (0.25mm);
\path[line width=0.30mm, draw=L, fill=F] (135.22,30.51) circle (0.25mm);
\path[line width=0.30mm, draw=L, fill=F] (135.22,28.88) circle (0.25mm);
\path[line width=0.30mm, draw=L, fill=F] (182.25,32.15) circle (0.25mm);
\path[line width=0.30mm, draw=L, fill=F] (182.25,30.51) circle (0.25mm);
\path[line width=0.30mm, draw=L, fill=F] (182.25,28.88) circle (0.25mm);
\definecolor{F}{rgb}{1,1,1}
\path[line width=0.15mm, draw=L, fill=F] (182.26,45.97) circle (0.50mm);
\path[line width=0.15mm, draw=L, fill=F] (196.52,54.14) circle (0.50mm);
\path[line width=0.15mm, draw=L, fill=F] (197.32,50.02) circle (0.50mm);
\path[line width=0.15mm, draw=L, fill=F] (192.38,45.99) circle (0.50mm);
\path[line width=0.15mm, draw=L, fill=F] (187.20,45.99) circle (0.50mm);
\path[line width=0.15mm, draw=L] (182.78,45.95) -- (186.83,45.99);
\path[line width=0.15mm, draw=L] (187.84,45.93) -- (191.89,45.97);
\path[line width=0.15mm, draw=L] (193.00,46.02) -- (197.00,49.67);
\path[line width=0.15mm, draw=L] (192.91,54.09) -- (196.09,54.09);
\path[line width=0.15mm, draw=L, fill=F] (192.37,54.31) circle (0.50mm);
\path[line width=0.15mm, draw=L, fill=F] (187.24,54.25) circle (0.50mm);
\path[line width=0.15mm, draw=L, fill=F] (182.18,54.08) circle (0.50mm);
\path[line width=0.15mm, draw=L] (182.74,54.14) -- (186.79,54.18);
\path[line width=0.15mm, draw=L] (187.85,54.20) -- (191.90,54.24);
\path[line width=0.15mm, draw=L] (192.78,54.13) -- (196.88,50.36);
\path[line width=0.15mm, draw=L] (192.98,45.89) -- (196.16,45.89);
\path[line width=0.15mm, draw=L, fill=F] (196.73,45.87) circle (0.50mm);
\definecolor{F}{rgb}{0,0,0}
\path[line width=0.30mm, draw=L, fill=F] (182.25,51.75) circle (0.25mm);
\path[line width=0.30mm, draw=L, fill=F] (182.25,50.12) circle (0.25mm);
\path[line width=0.30mm, draw=L, fill=F] (182.25,48.48) circle (0.25mm);
\end{tikzpicture}%
\caption{Showing values of $\chi_{L,2}(G)$}
\label{tab:my_table}
\end{table}

\begin{theorem}\label{Theorem3.2'}
If $G\in \mathcal{G}_{n}$, $n\neq5$ and $r\geq3$, then $\chi_{L,r}(G)=\min\{r,\Delta(G)\}+1=k$.
\end{theorem}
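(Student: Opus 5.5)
The lower bound $\chi_{L,r}(G)\geq k$ is immediate from (\ref{eq:2'}), so the plan is to prove $\chi_{L,r}(G)\leq k$. First note that $G\in\mathcal{G}_{n}$ forces $\Delta(G)\geq3$, and $r\geq3$, so $k=\min\{r,\Delta(G)\}+1\geq4$. On the other hand, Theorem \ref{theorem1.2'} gives $\chi_{L,r}(C_{n})\leq4$ for $n\neq5$ and $r\geq2$. Fix an arbitrary $k$-list $L$ of $G$. For each $v\in V(C_{n})$ choose a $4$-subset $L'(v)\subseteq L(v)$. By Theorem \ref{theorem1.2'} there is an $(L',r)$-coloring $c_{0}$ of $C_{n}$, and this is also an $(L,r)$-coloring of $C_{n}$. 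Since $r\geq2$ and every cycle vertex has degree $2$ in $C_{n}$, we get $|c_{0}(N_{C_{n}}(v_{i}))|=2$ for every $v_{i}\in V(C_{n})$. Hence for each $i$ the three colors $c_{0}(v_{i-1}),c_{0}(v_{i}),c_{0}(v_{i+1})$ are mutually distinct.

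Next, for each $v_{i}\in M(G)$ I would apply $(i)$ of Lemma \ref{Lemma3.1'} to $T_{v_{i}}$ with $a_{1}=c_{0}(v_{i})$, $a_{2}=c_{0}(v_{i-1})$ and $a_{3}=c_{0}(v_{i+1})$. The lemma is stated for $k(r,T_{v_{i}})$-lists, and $k(r,T_{v_{i}})=\min\{r,\Delta(T_{v_{i}})\}+1\leq k$. So I would replace $L$ on $V(T_{v_{i}})$ by sublists of size $k(r,T_{v_{i}})$. On $v_{i-1},v_{i},v_{i+1}$ these sublists are chosen to contain the prescribed colors $a_{2},a_{1},a_{3}$, which is possible since $k(r,T_{v_{i}})\geq1$. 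The lemma then yields an $(L,r)$-coloring $c_{i}$ of $T_{v_{i}}$ that agrees with $c_{0}$ on $\{v_{i-1},v_{i},v_{i+1}\}$.

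Finally, glue the colorings via (\ref{eq:8'}) and invoke Lemma \ref{3.1-3.2} with $l=k$. Its hypothesis, that $c_{0}$ gives two colors on the cycle-neighbourhood of every $v_{i}\notin M(G)$, holds by the first step. This yields $\chi_{L,r}(G)\leq k$ and hence equality.

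The step I expect to need the most care is the consistency of the gluing, together with condition $(C2)$ at the cycle vertices. Consecutive trees $T_{v_{i}}$ and $T_{v_{i+1}}$ (when both $v_{i},v_{i+1}\in M(G)$) share the vertices $v_{i}$ and $v_{i+1}$. Different $T_{v_{j}}$ are otherwise disjoint off the cycle, and every $c_{j}$ is forced to equal $c_{0}$ on cycle vertices, so the definition (\ref{eq:8'}) is unambiguous. For $(C2)$, the argument runs as follows:
\begin{itemize}
\item For $v_{i}\in M(G)$ we have $N_{G}(v_{i})=N_{T_{v_{i}}}(v_{i})$, so $(C2)$ at $v_{i}$ is inherited from $c_{i}$.
\item Off-cycle vertices have the same neighbourhood in $G$ as in their tree, so $(C2)$ is likewise inherited.
\item Degree-$2$ cycle vertices see two distinct colors by the choice of $c_{0}$.
\end{itemize}
This is exactly the content of Lemma \ref{3.1-3.2}, so once the sublist reduction in Lemma \ref{Lemma3.1'} is justified, the proof goes through.
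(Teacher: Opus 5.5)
Your proposal is correct and follows the paper's proof. The paper also takes a $2$-hued $L$-coloring $c_0$ of $C_n$ from Theorem \ref{theorem1.2'} (available because $k\geq 4\geq \chi_{L,r}(C_n)$ for $n\neq 5$), extends it to each $T_{v_i}$ by Lemma \ref{Lemma3.1'}$(i)$ using that every cycle vertex sees two distinct colors, and glues via (\ref{eq:8'}) and Lemma \ref{3.1-3.2} with $l=k$. Your passage to sublists ($4$-sublists on the cycle, and $k(r,T_{v_i})$-sublists containing the prescribed colors for Lemma \ref{Lemma3.1'}) only makes explicit a monotonicity step the paper uses without comment.
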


\noindent\textbf{Proof}. By \text{(}\ref{eq:2'}\text{)}, it follows that $\chi_{L,r}(G)\geq k$. Next, we only need to prove that $\chi_{L,r}(G)\leq k$.

\par Since $\Delta(G)\geq3$ and $r\geq3$, we have $k=\min\{{r, \Delta(G)}\}+1\geq 4$. By Theorem \ref{theorem1.2'}, $\chi_{L,r}(C_{n})\leq 4$. For any $k$-list $L$ of $G$, there exists an $(L, r)$-coloring $c_{0}$ of $C_{n}$ such that $|c_{0}(N_{C_{n}}(v_{i}))|=2$ for every vertex $v_{i}\in M(G)$. Therefore, for any $v_{i}\in M(G)$, by $(i)$ of Lemma \ref{Lemma3.1'}, there exists an $(L,r)$-coloring $c_{i}$ of $T_{v_{i}}$ such that $c_{i}(v_{i})=c_{0}(v_{i})$, $c_{i}(v_{i-1})=c_{0}(v_{i-1})$ and $c_{i}(v_{i+1})=c_{0}(v_{i+1})$.

\par As for each $i$, $c_{0}$ and $c_{i}$ agree on $\{v_{i-1}, v_{i}, v_{i+1}\}$, we apply \text{(}\ref{eq:8'}\text{)} to obtain an $(L, r)$-coloring $c$ of $G$. By Lemma \ref{3.1-3.2} with $|L|=k$, we have $\chi_{L,r}(G)\leq k$. Thus we have $\chi_{L,r}(G)=k=\min\{r,\Delta(G)\}+1$.
$\hfill\square$

\par In this paper, Theorem \ref{Theorem3.3'} proves all cases for $r=1$; Theorem \ref{Th3.5'} proves the case where $n=5$ and $r\geq 2$; Theorem \ref{Theorem3.4} proves the case where $n\neq 5$ and $r=2$; Theorem \ref{Theorem3.2'} proves the case where $n\neq 5$ and $r\geq 3$. Hence, by these four theorems, it is known that when $n\neq5$ and $r\geq3$, $\chi_{L,r}(G)=k$; when $n=5$ or $r\leq2$, $\chi_{L,r}(G)\in \{k, k+1\}$. This completes the proof of Theorem \ref{TH1.6'}. It shows that for the value of $\chi_{L,r}(G)$, both $k$ and $k+1$ can be actually obtained, and there are no other possible values. And, if $k\geq5$, then $\chi_{L,r}(G)=k$; $\chi_{L,r}(G)$ can take the value of $k+1$ only when $k\leq4$.\\

\noindent\textbf{Data Availability} This manuscript has no associated data.\\

\noindent\textbf{\LARGE Declarations}\\

\noindent\textbf{Competing Interests} The authors declare that they have no competing interests.


\begin{thebibliography}{111}


\bibitem{SMS13} S. Akbari, M. Ghanbari, S. Jahanbekam, {\em On the list dynamic coloring of graphs}, Discrete Appl. Math. 157 (14) (2009) 3005-3007.

\bibitem{Mont14} M. Alishahi, {\em On the dynamic coloring of graphs}, Discrete Appl. Math. 159 (2011) 152-156.

\bibitem{Mont15} A. Ahadi, S. Akbari, A. Dehghana, M. Ghanbari, {\em On the difference between chromatic number and dynamic chromatic number of graphs}, Discrete Math. 312 (17) (2012) 2579-2583.

\bibitem{BoMu01} J. A. Bondy, U. S. R. Murty, {\em Graph theory}, Springer, New York, 2008.

\bibitem{CFLSS15} Y. Chen, S. Fan, H.-J. Lai, H. Song, L. Sun, {\em On dynamic coloring for planar graphs and graphs of higher genus}, Discrete Appl. Math. 160 (2012) 1064-1071.

\bibitem{CFLX09} Y. Chen, S. Fan, H.-J. Lai, M. Xu, {\em Graph r-hued colorings-A survey}, Discrete Appl. Math. 321 (4) (2022) 24-48.

\bibitem{JLJZ07} X. H. Jia, F. X. Liu, B. Y. Ji, Z. H. Zhao, {\em The list r-hued coloring of $P_{5}$-free graph}, Applied Mathematics and Computation. 511 (2026) 129742.

\bibitem{MJVU21} M. Jovanovi$\acute{c}$, V. Uljarevi$\acute{c}$, {\em L-colorings of graphs}, Master thesis, University of Novi Sad, 2024.

\bibitem{LLMS05} H.-J. Lai, J. Lin, B. Montgomery, T. Shui, S. Fan, {\em Conditional coloring of graphs}, Discrete Math. 306 (16) (2006) 1997-2004.

\bibitem{LLMS08} H.-J. Lai, X. Lv, M. Xu,
{\em On r-hued colorings of graphs without short induced paths}, Discrete Math. 342 (7) (2019) 1904-1911.

\bibitem{LMP30} H.-J. Lai, B. Montgomery, H. Poon, {\em Upper bounds of dynamic chromatic number}, Ars Combin. 68 (1) (2003) 193-201.

\bibitem{SHI29} B. Montgomery, {\em Dynamic Coloring of Graphs}, Ph.D. thesis, West Virginia University, 2001.


\end{thebibliography}
\end{document}